\def\aeMarkRightAngle[size=#1](#2,#3,#4){
   \draw ($(#3)!#1!(#2)$) -- 
         ($($(#3)!#1!(#2)$)!#1!90:(#2)$) --
         ($(#3)!#1!(#4)$);}
\numberwithin{equation}{section}
\newcommand{\leqnomode}{\tagsleft@true\let\veqno\@@leqno}
\newcommand{\reqnomode}{\tagsleft@false\let\veqno\@@eqno}
\DeclareMathOperator{\SL}{SL}
\DeclareMathOperator{\Gr}{Gr}
\DeclareMathOperator{\Hom}{Hom}
\DeclareMathOperator{\Sym}{Sym}
\DeclareMathOperator{\Sp}{Sp}
\DeclareMathOperator{\Spin}{Spin}
\DeclareMathOperator{\SO}{SO}
\DeclareMathOperator{\LGr}{LGr}
\DeclareMathOperator{\Aut}{Aut}
\DeclareMathOperator{\PGL}{PGL}
\DeclareMathOperator{\Pic}{Pic}
\newtheorem{theorem}{Theorem}[section]
\newtheorem{lemma}[theorem]{Lemma}
\newtheorem{proposition}[theorem]{Proposition}
\newtheorem{corollary}[theorem]{Corollary}
\theoremstyle{definition}
\newtheorem{example}[theorem]{Example}
\newtheorem{definition}[theorem]{Definition}
\begin{document}
	
\title{Deformation rigidity of the double Cayley Grassmannian}

\author{Shin-young Kim}
\address{Yonsei University, 50 Yonsei-ro Seodaemun-gu, Seoul, Republic of Korea, 03722}
\email{shin-young.kim@yonsei.ac.kr}

\author{Kyeong-Dong Park}
\address{Department of Mathematics and Research Institute of Molecular Alchemy, Gyeongsang National University, Jinju, Republic of Korea, 52828}
\email{kdpark@gnu.ac.kr}

\keywords{double Cayley Grassmannian, deformation rigidity, variety of minimal rational tangents, prolongations of a linear Lie algebra}
\subjclass[2010]{Primary: 14M27, 32G05, Secondary: 14M17, 32M12} 

\begin{abstract}
The double Cayley Grassmannian is a unique smooth equivariant completion with Picard number one of the 14-dimensional exceptional complex Lie group $G_2$, 
and it parametrizes eight-dimensional isotropic subalgebras of the complexified bi-octonions. 
We show the rigidity of the double Cayley Grassmannian under K\"{a}hler deformations. 
This means that for any smooth projective family of complex manifolds over a connected base of which one fiber is biholomorphic to the double Cayley Grassmannian, all other fibers are biholomorphic to the double Cayley Grassmannian.

\end{abstract}
\maketitle
\setcounter{tocdepth}{1} 

\date{\today}

\section{Introduction}

Let $G$ be a connected reductive algebraic group over $\mathbb C$ and let $\theta$ be a group involution of $G$. 
When $H$ is a closed subgroup of $G$ such that {$G^{\theta,0} \subset H \subset N_G(G^{\theta})$}, we call the homogeneous space $G/H$ \emph{a symmetric homogeneous space}. 
We call $X$ \emph{a symmetric variety} if $X$ is a normal $G$-variety together with a $G$-equivariant open embedding of a symmetric homogeneous space $G/H$ into $X$. 

By using the Luna--Vust theory on spherical varieties, A. Ruzzi \cite{Ruzzi2011} classified the smooth projective symmetric varieties with Picard number one via colored fans. 
As a result, there are only 6 non-homogeneous smooth projective symmetric varieties with Picard number one, whose restricted root system is of type $A_2$ or of type $G_2$. 
When smooth projective symmetric varieties with Picard number one have the restricted root system of type $A_2$, A. Ruzzi also gave us geometric descriptions in Theorem 3 of \cite{Ruzzi2010}: 
these $A_2$-type symmetric varieties are smooth equivariant completions of symmetric homogeneous spaces 
$\SL(3, \mathbb C)/\SO(3, \mathbb C)$, 
$(\SL(3, \mathbb C) \times \SL(3, \mathbb C))/\SL(3, \mathbb C)$, 
$\SL(6, \mathbb C)/\Sp(6, \mathbb C)$, $E_6/F_4$, 
and they are isomorphic to a general hyperplane section of rational homogeneous manifolds 
which are in the third row of the \emph{geometric Freudenthal--Tits magic square}: 
\begin{center}
\begin{tabular}{ c | c c c c }
\hline
   & $\mathbb R$ & $\mathbb C$ & $\mathbb H$ & $\mathbb O$  \\
 \hline
  $\mathbb R$ & $v_4(\mathbb P^1)$ & $\mathbb P (T_{\mathbb P^2})$ & $\Gr_{\omega}(2, 6)$ & $\mathbb{OP}^2_0$  \\
  $\mathbb C$ & $v_2(\mathbb P^2)$ & $\mathbb P^2 \times \mathbb P^2$ & $\Gr(2, 6)$ & $\mathbb{OP}^2$ \\
  $\mathbb H$ & $\LGr(3, 6)$ & $\Gr(3, 6)$ & $\mathbb S_6$ & $E_7/P_7$ \\
  $\mathbb O$ & $F_4^{ad}$ & $E_6^{ad}$ & $E_7^{ad}$ & $E_8^{ad}$ \\
\hline
\end{tabular}
\end{center}
On the other hand, by Theorem 2 of \cite{Ruzzi2010}, the smooth projective symmetric varieties with Picard number one whose restricted root system is of type $G_2$ are smooth equivariant completions of either $G_2/(\SL(2, \mathbb C) \times \SL(2, \mathbb C))$ or $(G_2 \times G_2)/G_2$. 
They are called the \emph{Cayley Grassmannian} and the \emph{double Cayley Grassmannian}, respectively, which are studied by L. Manivel \cite{Manivel18, Manivel19}. 
The double Cayley Grassmannian is a unique smooth equivariant completion with Picard number one of the 14-dimensional complex Lie group $G_2$. 
It can be described as the zero locus of a general global section of a certain vector bundle on the 21-dimensional spinor variety by Theorem~1 of \cite{Manivel19} (see Proposition~\ref{Description of the double Cayley Grassmannian} for details).  

A smooth projective variety $X$ is called \emph{globally rigid} if, for any smooth projective family over a connected base with one fiber isomorphic to $X$, all fibers of the family are isomorphic to $X$. For a rational homogeneous manifold $G/P$ with Picard number one, the global deformation rigidity was studied by Hwang and Mok
in \cite{Hw97}, \cite{HwM98}, \cite{HM02}, \cite{HwM04b}, \cite{HwM05}:  
a rational homogeneous manifold with Picard number one, 
different from the orthogonal isotropic Grassmannian $\text{OGr}(2, 7)=B_3/P_2$, is globally rigid. 
This result can be generalized to some quasi-homogeneous varieties with Picard number one. 
Among smooth projective horospherical varieties with Picard number one, the deformation rigidity of odd Lagrangian Grassmannians \cite{Park}, odd symplectic Grassmannians \cite{HL2019} and the $G_2$-horospherical manifold \cite{HL23} was studied. 

It is then natural to ask the same question about smooth projective symmetric varieties, and the problem was studied in \cite{KP19}. 
Recently, in \cite{ChenFuLi}, they have proved the global rigidity of smooth projective symmetric varieties with Picard number one of which the restricted root system is of type $A_2$. 
The aim of this paper is to prove the \emph{global rigidity under K\"ahler deformation} of the double Cayley Grassmannian.

\begin{theorem}\label{Main theorem} 
Let $\pi \colon \mathcal X \rightarrow \Delta$ be a smooth projective morphism from a complex manifold $\mathcal X$ to the unit disc $\Delta\subset\mathbb C$. 
Suppose for any $t\in \Delta \backslash \{0\}$,
the fiber $\mathcal X_t=\pi^{-1}(t)$ is biholomorphic to the double Cayley Grassmannian $S$.
Then the central fiber $\mathcal X_0$ is also biholomorphic to $S$.
\end{theorem}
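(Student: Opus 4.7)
The plan is to implement the variety of minimal rational tangents (VMRT) strategy of Hwang--Mok, in the form adapted to the symmetric setting in \cite{KP19} and \cite{ChenFuLi}. Since $\pi$ is smooth projective with generic fiber $S$, all fibers are smooth projective Fano (being Fano is open), and Picard number one is preserved because $h^{1,1}$ is constant in a smooth family and each fiber is projective. In particular, the minimal dominating family of rational curves on $S$ deforms to a minimal dominating family on the central fiber $\mathcal X_0$ with the same numerical invariants, and the problem reduces to identifying the VMRT of $\mathcal X_0$ at a generic point and then integrating this pointwise datum to a biholomorphism $\mathcal X_0 \simeq S$.

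The first substantive step is to describe the VMRT $\mathcal C_x \subset \mathbb P(T_x S)$ at a generic point $x$ of $S$. Since the open $G_2$-orbit is $(G_2 \times G_2)/G_2$, the VMRT at $x$ should be a $G_2$-homogeneous projective variety embedded $G_2$-equivariantly in the $14$-dimensional tangent space. I would compute it via Manivel's description of $S$ as the zero locus of a general section of a natural vector bundle on the $21$-dimensional spinor variety (Proposition \ref{Description of the double Cayley Grassmannian}): the minimal rational curves on $S$ arise from the lines on the spinor variety that happen to lie on $S$, and both $\mathcal C_x$ and the isotropy action can be read off from this description.

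Next I would study the relative Hilbert scheme of minimal rational curves over $\Delta$. Its generic fiber is the VMRT fibration on $S$; semicontinuity together with a rigidity check for $\mathcal C_x$ as an embedded projective subvariety (reducing to a computation of $H^0(\mathcal C_x, \mathcal O(1))$ and of sections of the normal bundle) should force the VMRT of $\mathcal X_0$ at a generic point to be projectively isomorphic to $\mathcal C_x$. Crucially, one must rule out that the VMRT degenerates under specialization, and this is where the $G_2$-equivariant rigidity of $\mathcal C_x$ as a projective variety plays its role.

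The final step is the Cartan--Fubini type extension theorem of Hwang--Mok, which promotes a pointwise identification of VMRTs on an open dense set to a biholomorphism $\mathcal X_0 \simeq S$ provided the symbol algebra associated to $\mathcal C_x$ has vanishing first prolongation. I expect the main obstacle to be precisely this prolongation computation, reflected in the keyword \emph{prolongations of a linear Lie algebra}: because $\mathcal C_x$ arises from a wonderful symmetric variety rather than from a rational homogeneous space of Picard number one, the associated graded Lie algebra is not one of the classical Hermitian or contact examples, and the vanishing of its first prolongation must be verified by a direct $G_2$-representation-theoretic computation on $T_x S$. Once this prolongation vanishing is in place, the Hwang--Mok machinery delivers the biholomorphism $\mathcal X_0 \simeq S$ and completes the proof.
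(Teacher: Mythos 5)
Your first two steps (identifying the VMRT of $S$ at a general point as the $G_2$-homogeneous variety $G_2^{ad}\subset\mathbb P(\mathfrak g_2)$ via Manivel's description, and showing the VMRT does not degenerate under specialization) track the paper's Proposition~\ref{VMRT of S} and Corollary~\ref{VMRT}, although the paper's non-degeneration argument is more specific than yours: it invokes Hwang's rigidity of homogeneous contact manifolds under Fano deformation to trivialize the family $\mathcal K_{\sigma(t)}$, and then rules out linear degeneracy of the tangent map via an integrability argument on the distribution spanned by the VMRT. That part of your plan is workable.

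The genuine gap is in your final step. A Cartan--Fubini type extension theorem takes as input a \emph{local VMRT-preserving biholomorphism} between open subsets of the two manifolds; merely knowing that the VMRT of $\mathcal X_0$ at each general point is projectively isomorphic to $\mathcal C_x(S)\cong G_2^{ad}$ provides no such map, and since $S$ is not homogeneous there is no recognition theorem that converts this pointwise datum into $\mathcal X_0\simeq S$. Moreover, the prolongation vanishing you flag as the main obstacle is not the issue and is not sufficient: the first prolongation of $\mathfrak{aut}(\widehat{G_2^{ad}})$ does vanish (by the Fu--Hwang classification), but because $\mathfrak{aut}(\widehat{G_2^{ad}})\cong\mathfrak g_2\oplus\mathbb C$ has dimension $15 = \dim\mathfrak{aut}(S)-\dim S+1$, the resulting bound $\dim\mathfrak{aut}(\mathcal C,x)\le \dim\mathcal X_0+\dim\mathfrak{aut}(\widehat{\mathcal C_x})$ only yields $h^1(\mathcal X_0,T_{\mathcal X_0})\le 1$, leaving open the alternative that $\mathcal X_0$ is an equivariant compactification of $\mathbb G_a^{14}$ carrying the \emph{locally flat} $G_2^{ad}$-cone structure --- a genuine competitor with exactly the same VMRT at every general point. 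Excluding this alternative is where most of the work in the paper lies: one restricts the family to the fixed locus of a maximal torus $T\subset G_2$ to obtain a family of surfaces whose general fiber is the hexagonal toric surface $\overline{T}\subset S$ of Picard number $4$, observes that the Weyl group $W(G_2)$ acts transitively on the boundary components, and then derives a contradiction with the Hassett--Tschinkel classification of $\mathbb G_a^2$-compactifications (which forces Picard number $1$ or $2$ under that transitivity). Your proposal contains no mechanism for ruling out the flat model, so as written it cannot close the proof.
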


To prove the global deformation rigidity of the double Cayley Grassmannian, we mainly use the VMRT theory developed by Hwang and Mok (see Section~3.1 for details), and the methods developed in \cite{KP19} and \cite{FuLi}: the prolongations of the cone structure defined by VMRT, and the reduction of deformation to the one of embedded toric surface.

In Section~2, we recall the description of the double Cayley Grassmannian as the zero locus of a general global section of a certain vector bundle on the 21-dimensional spinor variety, from which we can obtain the local deformation rigidity of complex structures of $S$. See also \cite{Manivel19} for this local rigidity. 
In Section~3, we will prove Theorem~\ref{Main theorem}. By considering the automorphism group of $S$ and the prolongations of the Lie algebra of infinitesimal automorphisms of the cone structure given by its VMRT, we know that the central fiber $\mathcal X_0$ is isomorphic to either $S$ or an equivariant compactification of the vector group $\mathbb G_a^{14}$ of dimension 14. 
If we assume that $\mathcal X_0$ is isomorphic to an equivariant compactification of the vector group $\mathbb G_a^{14}$, then from the same argument as in \cite{FuLi} we get a family of smooth projective surfaces of Picard number 4 such that its general fiber is the toric surface given by the closure of a maximal torus in $G_2$ and the central fiber is an equivariant compactification of the vector group $\mathbb G_a^{2}$ of dimension 2. 
The family of surfaces also admits a finite group action induced from the $G_2$-action on $\mathcal X$. 
As this action is transitive on the set of irreducible components of the boundary divisor, it gives two possible restrictions on the central fiber due to the classification on vector group compactifications of $\mathbb G_a^{2}$ in \cite{HT99}. 
However, their Picard numbers are 1 or 2, not 4, which gives a contradiction concluding the global deformation rigidity of the double Cayley Grassmannian.

\vskip 1em

\noindent
\textbf{Acknowledgements}.
The authors would like to thank Laurent Manivel
for beneficial discussions and inspiration, who kindly explained his ideas on this topic to the second author. 
They also would like to express their thanks to Baohua Fu, Qifeng Li, and Jaehyun Hong for valuable comments.

Shin-young Kim was supported by Basic Science Research Program through the National Research Foundation of Korea (NRF) funded by the Ministry of Education (RS-2021-NR060138), and by the National Research Foundation of Korea (NRF) grant funded by the Korean government (MSIT) (RS-2024-00341743).

Kyeong-Dong Park was supported by the National Research Foundation of Korea (NRF) grant funded by the Korea government (MSIT) (RS-2021-NR062093), and by the fund of research promotion program, Gyeongsang National University, 2022. 
He was also supported by Global - Learning \& Academic research institution for Master’s·PhD students, and Postdocs (LAMP) Program of the National Research Foundation of Korea (NRF) grant funded by the Ministry of Education (RS-2023-00301974). 

\section{Geometric description and local deformation rigidity} 

A. Ruzzi \cite{Ruzzi2011} proved that the symmetric homogeneous space $(G_2 \times G_2)/ G_2$ admits a unique smooth equivariant completion with Picard number one, which is called the \emph{double Cayley Grassmannian}. 
As a complex Lie group, $G_2$ is the algebra automorphism of the complexified octonions $\mathbb O_{\mathbb C}:= \mathbb C \otimes_{\mathbb R} \mathbb O$ {and} its Cartesian product $G_2 \times G_2$ acts on the complexified bi-octonions $\mathbb C \otimes_{\mathbb R} (\mathbb C \otimes_{\mathbb R} \mathbb O) \cong \mathbb O_{\mathbb C} \oplus \mathbb O_{\mathbb C}$. 
For any element $g \in G_2$, {there is} the corresponding graph $$\Gamma_g = \{ (x, gx) \in \mathbb O_{\mathbb C} \oplus \mathbb O_{\mathbb C} : x \in \mathbb O_{\mathbb C} \}$$ which is an eight-dimensional isotropic subalgebra of the complexified bi-octonions, with respect to the difference of the octonionic norms on the two copies of $\mathbb O_{\mathbb C}$. 
An eight-dimensional isotropic subalgebra transverse to the decomposition $\mathbb O_{\mathbb C} \oplus \mathbb O_{\mathbb C}$ is the graph $\Gamma_g$ for some $g \in G_2$. 
Hence, the homogeneous space $(G_2 \times G_2)/ G_2$ parametrizes transverse eight-dimensional isotropic subalgebras. 
In fact, the double Cayley Grassmannian parametrizes eight-dimensional isotropic subalgebras of the complexified bi-octonions by Proposition~6 of \cite{Manivel19}. 

Since the intersection of $\Gamma_g$ with $\mbox{Im}\, \mathbb O_{\mathbb C} \oplus \mbox{Im}\, \mathbb O_{\mathbb C} \cong \mathbb C^{14}$ is a seven-dimensional isotropic subspace of 14-dimensional orthogonal vector space, we can get an embedding of $G_2$ into the 21-dimensional spinor variety $\mathbb S_{14}=\Spin_{14}(\mathbb C)/P_7$, where {$P_7$} denotes the maximal parabolic subgroup associated to the simple root $\alpha_7$ of $D_7$. 
The double Cayley Grassmannian is the closure of $G_2$ under this embedding, so we can obtain a geometric description of the double Cayley Grassmannian as a subvariety of the spinor variety $\mathbb S_{14}$. 

\begin{proposition}[Theorem 1 of \cite{Manivel19}] 
\label{Description of the double Cayley Grassmannian} 
The double Cayley Grassmannian $S$ is projectively equivalent to the zero locus of a general global section of the rank seven vector bundle $\mathcal U \otimes L$ on the 21-dimensional spinor variety $\mathbb S_{14}=\Spin_{14}(\mathbb C)/P_7$, 
where $\mathcal U$ denotes the universal subbundle of rank seven on $\mathbb S_{14}$ and 
$L$ is the very ample line bundle defining the minimal embedding $\mathbb S_{14} \hookrightarrow \mathbb P^{63}$. 
\end{proposition}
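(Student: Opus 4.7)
The plan is to realize $S$ as the closure of the natural embedding $\iota\colon G_2\hookrightarrow \mathbb S_{14}$ described just above, and then identify this closure with the zero locus of a canonical section of $\mathcal U\otimes L$ built from the octonion multiplication. First I would verify that $\iota$, defined by $g\mapsto W_g := \Gamma_g \cap (\mathrm{Im}\,\mathbb O_{\mathbb C})^{\oplus 2}$, is a $G_2\times G_2$-equivariant embedding landing in the correct spinor family: the $7$-plane $W_{\mathrm{id}}$ is the diagonal of $\mathrm{Im}\,\mathbb O_{\mathbb C}$, which is easily seen to lie in $\mathbb S_{14}=\Spin_{14}(\mathbb C)/P_7$, and since the image of $\iota$ is connected it remains in this family throughout. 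By Ruzzi's uniqueness of the smooth Picard-one equivariant completion of $(G_2\times G_2)/G_2$, the closure $\overline{\iota(G_2)}$ in $\mathbb S_{14}$ is projectively equivalent to $S$, so it suffices to show that this closure coincides with the zero locus of some $s\in H^0(\mathbb S_{14},\mathcal U\otimes L)$.

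Next, I would construct the section $s$ from an algebra-obstruction map. The key observation is that a $7$-dimensional isotropic subspace $W\subset\mathbb C^{14}$ arises as some $W_g$ precisely when $W$ extends to an $8$-dimensional isotropic subalgebra of $\mathbb O_{\mathbb C}^{\oplus 2}$ containing $(1,1)$. Concretely, for $w=(u,v)\in W$ the ``difference of squares'' $u^2-v^2\in\mathbb O_{\mathbb C}$ must lie in the imaginary part, and the scalar component of $u^2-v^2$ in $\mathbb C\cdot 1$ gives the obstruction. Globalizing over $\mathbb S_{14}$, the resulting scalar-valued quadratic form on $\mathcal U$ assembles into a morphism $\Sym^2\mathcal U\to L'$ for some line bundle $L'$; an argument using triality for $D_4\subset D_7$, together with the decomposition of the spinor representation under the embedding $G_2\times G_2\subset\Spin_{14}$, should identify this obstruction with the desired section $s\in H^0(\mathbb S_{14},\mathcal U\otimes L)$.

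The remaining verifications are then standard: the section $s$ vanishes on $\overline{\iota(G_2)}$ because each $\Gamma_g$ is by construction a subalgebra; the zero locus has expected codimension $7=\mathrm{rk}(\mathcal U\otimes L)$, giving dimension $14=\dim G_2$, and irreducibility at a general point follows from a differential computation showing that $s$ is transverse to the zero section along $\iota(G_2)$; finally, a Bertini-type argument ensures that a general $\Spin_{14}$-translate of $s$ has smooth zero locus, for which one needs the global generation of $\mathcal U\otimes L$. The main obstacle is the bundle-theoretic step in the middle: the algebra-obstruction is naturally quadratic in $\mathcal U$, so recasting it as a section of the rank-seven bundle $\mathcal U\otimes L$ (rather than, say, of a sub-bundle of $\Sym^2\mathcal U^{\vee}\otimes\mathcal O(1)$) requires the representation-theoretic identification provided by triality together with the $G_2$-equivariant decomposition of $\mathrm{Im}\,\mathbb O_{\mathbb C}$. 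Once this identification is secured, combining the dimension count with Ruzzi's uniqueness theorem yields the stated projective equivalence.
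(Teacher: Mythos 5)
The paper itself offers no proof of this proposition: it is quoted as Theorem~1 of \cite{Manivel19}, so your attempt can only be compared with Manivel's argument. Your first step is sound (the closure of $\iota(G_2)$ in $\mathbb S_{14}$ is projectively equivalent to $S$ by Ruzzi's uniqueness), but the core of the argument is missing precisely at the point you flag as ``the main obstacle'', and it is not a technical wrinkle that triality will smooth over. The subalgebra condition you write down is quadratic in $W$, so globalizing it yields a section of (a quotient of) $\Sym^2\mathcal U^{\vee}$ twisted by a line bundle --- a bundle of the wrong rank and the wrong tensor type --- and no $G_2$-equivariant decomposition of $\mathrm{Im}\,\mathbb O_{\mathbb C}$ converts that into a section of the rank-seven bundle $\mathcal U\otimes L$. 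What actually drives Manivel's proof is the Borel--Weil computation that $H^0(\mathbb S_{14},\mathcal U\otimes L)$ is the $64$-dimensional half-spin representation $\Delta_{14}$ of $\Spin_{14}(\mathbb C)$, combined with the Sato--Kimura/Igusa fact that $\Spin_{14}(\mathbb C)\times\mathbb C^{*}$ acts on $\Delta_{14}$ with a dense orbit whose generic stabilizer is $G_2\times G_2$; the relevant section is the one attached to a generic spinor $\psi$ (essentially Clifford multiplication against $\psi$), not to the octonionic multiplication table.

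This prehomogeneity is also what your final reduction silently uses. You pass from ``the specific section $s$ built from the algebra structure'' to ``a general global section'' via ``a general $\Spin_{14}$-translate of $s$'', but these coincide only because the $\Spin_{14}(\mathbb C)\times\mathbb C^{*}$-orbit of the corresponding spinor is dense in the $64$-dimensional section space; Bertini and global generation alone do not bridge the gap between a general member of a $91$-dimensional group orbit and a general point of $H^0(\mathbb S_{14},\mathcal U\otimes L)$. Finally, transversality of $s$ to the zero section along $\iota(G_2)$ shows that $\overline{\iota(G_2)}$ is a $14$-dimensional irreducible component of $Z(s)$, but it does not exclude other components; to conclude $Z(s)=\overline{\iota(G_2)}$ one needs either a connectedness theorem for zero loci of sections of suitably positive bundles of rank less than $\dim\mathbb S_{14}$, or an orbit analysis of the $G_2\times G_2$-action on $Z(s)$. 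As it stands, the proposal identifies the right objects but leaves unproved the two statements on which the theorem actually rests.
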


Since the Fano index of the 21-dimensional spinor variety $\mathbb S_{14}$ is equal to 12, 
$$K_S = K_{\mathbb S_{14}} \otimes \det(\mathcal U \otimes L) = L^{-12} \otimes (\det \mathcal U) \otimes L^{7} = L^{-12} \otimes L^{-2} \otimes L^{7} = L^{-7}$$
by the adjunction formula and Proposition~\ref{Description of the double Cayley Grassmannian}. 

\begin{corollary}[Corollary 3 of \cite{Manivel19}]  
\label{Fano index}
The double Cayley Grassmannian $S$ is a prime Fano variety of dimension 14 and index 7. 
\end{corollary}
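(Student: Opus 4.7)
The plan is to read off each piece of the statement --- dimension $14$, the Fano property, index $7$, and primality (Picard rank one) --- directly from the description of $S$ as the zero locus of a regular section of the rank seven bundle $\mathcal U \otimes L$ on the $21$-dimensional spinor variety $\mathbb S_{14}$, supplied by Proposition~\ref{Description of the double Cayley Grassmannian}.

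First, the dimension is immediate from codimension equalling rank: $\dim S = 21 - 7 = 14$. For the Fano property and the index, the adjunction computation carried out in the paragraph preceding the corollary already yields $K_S = L|_S^{-7}$. Since $L$ is very ample on $\mathbb S_{14}$, its restriction $L|_S$ is (very) ample as well, so $-K_S = 7\,L|_S$ is ample and $S$ is a Fano variety whose Fano index is a positive integer multiple of $7$.

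To upgrade this to index exactly $7$ and to simultaneously establish primality, I would apply a Barth--Lefschetz type theorem for zero loci of regular sections of ample vector bundles, in the form due to Sommese: if $E$ is an ample vector bundle on a smooth projective variety $X$ and $Z \subset X$ is the smooth zero locus of a regular section with $\dim Z \geq 3$, then the restriction $\Pic(X) \to \Pic(Z)$ is an isomorphism. Applied to $X = \mathbb S_{14}$, $E = \mathcal U \otimes L$, and $Z = S$ (so $\dim Z = 14 \geq 3$), this gives $\Pic(S) \cong \Pic(\mathbb S_{14}) = \mathbb Z \cdot L|_S$, which shows that $S$ has Picard rank one (hence is prime) and that its Fano index equals $7$ on the nose.

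The main --- and essentially only --- non-trivial step is verifying that $\mathcal U \otimes L$ is ample on $\mathbb S_{14}$; this is where I would expect the real work to lie. Granting it, everything else reduces to adjunction and a standard Lefschetz-type restriction theorem. Ampleness of $\mathcal U \otimes L$ can be checked, for instance, by restricting to the family of lines on $\mathbb S_{14}$: $L$ has degree $1$ on every such line, while $\mathcal U$ restricted to a line has a controlled splitting type, and the positivity contributed by $L$ overpowers the negativity of $\mathcal U$ along each line so that the restriction of $\mathcal U \otimes L$ splits as a sum of line bundles of positive degree, which suffices to conclude ampleness.
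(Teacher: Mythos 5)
Your reading of the dimension ($21-7=14$) and the adjunction computation $K_S=L|_S^{-7}$ agrees with what the paper does in the paragraph preceding the corollary, and the conclusion that $S$ is Fano with index divisible by $7$ is fine. The problem is the step you yourself flag as the only non-trivial one: $\mathcal U\otimes L$ is \emph{not} ample on $\mathbb S_{14}$, so Sommese's Lefschetz theorem for ample vector bundles does not apply as you invoke it. Concretely, every line $\ell\subset\mathbb S_{14}$ in the minimal embedding is of the form $\{U : W\subset U\subset W^{\perp}\}$ for a fixed $5$-dimensional isotropic subspace $W$, with $U/W$ ranging over one family of maximal isotropics of the $4$-dimensional quadratic space $W^{\perp}/W$. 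Hence $\mathcal U|_{\ell}$ contains the trivial subbundle $W\otimes\mathcal O_{\ell}$ with quotient $\mathcal O_{\ell}(-1)^{\oplus 2}$, and the extension splits, giving $\mathcal U|_{\ell}\cong\mathcal O^{\oplus 5}\oplus\mathcal O(-1)^{\oplus 2}$ (consistent with $\det\mathcal U=L^{-2}$). Therefore $(\mathcal U\otimes L)|_{\ell}\cong\mathcal O(1)^{\oplus 5}\oplus\mathcal O^{\oplus 2}$ has trivial quotient line bundles on every line: the bundle is nef but not ample, and your claim that $L$ ``overpowers'' the negativity so that all summands have positive degree is exactly where the argument fails. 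To salvage this route you would need the $k$-ample refinement of Sommese's theorem (which tolerates a controlled locus of positive-dimensional fibers of the map defined by $\mathcal O_{\mathbb P(\mathcal U\otimes L)}(1)$), and controlling that locus is genuine work, not a formality.

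It is also worth noting that the paper does not need any Lefschetz argument here: primality comes for free from the construction, since the double Cayley Grassmannian is by definition (via Ruzzi's classification) the unique \emph{smooth equivariant completion with Picard number one} of $(G_2\times G_2)/G_2$, and the corollary itself is quoted from Manivel's paper. So the paper's ``proof'' is just the adjunction computation plus the built-in Picard number one; your attempt to reprove primality from the zero-locus description is a legitimately different and more ambitious route, but as written it rests on a false ampleness claim.
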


From the Kodaira--Spencer deformation theory (for example, see \cite{Kodaira86}),  
it suffices to prove the vanishing of the first cohomology group $H^1(S, T_S)$ for the local deformation rigidity of complex structures of a compact complex manifold $S$. From the geometric description of the double Cayley Grassmannian $S$ in Proposition~\ref{Description of the double Cayley Grassmannian}, we obtain the Koszul complex associated with a general global section of $\mathcal U \otimes L$. Using the Borel--Weil--Bott theorem from \cite{Bott57} with the normal exact sequence leads to the local deformation rigidity of $S$. 

\begin{proposition}[Proposition~5 of \cite{Manivel19}]
\label{Local deformation rigidity of the double Cayley Grassmannian} 
The complex structure of the double Cayley Grassmannian $S$ is locally rigid, that is, for any smooth projective family $p \colon \mathscr S \to B$ over a connected base $B$ with one fiber isomorphic to $S=p^{-1}(a)$, there exists an analytic open neighborhood $\mathcal N$ of $a\in B$ such that the fibers $p^{-1}(b)$ of the family are isomorphic to $S$ for all $b \in \mathcal N$. 
\end{proposition}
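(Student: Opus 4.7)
By Kodaira--Spencer theory, it suffices to establish the vanishing $H^{1}(S, T_S)=0$. The plan is to pull this cohomology back to the ambient spinor variety $\mathbb{S}_{14}$ via the normal bundle sequence and then to resolve $\mathcal{O}_S$ on $\mathbb{S}_{14}$ by a Koszul complex, so that everything is reduced to cohomology computations of homogeneous vector bundles on $\mathbb{S}_{14}$ that Bott's theorem settles.

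First, from Proposition~\ref{Description of the double Cayley Grassmannian} the normal bundle is $N_{S/\mathbb{S}_{14}}\cong(\mathcal U\otimes L)|_S$, and I would write down the short exact sequence
\[
0\longrightarrow T_S\longrightarrow T_{\mathbb{S}_{14}}|_S\longrightarrow (\mathcal U\otimes L)|_S\longrightarrow 0.
\]
The associated long exact sequence shows that $H^{1}(S,T_S)=0$ follows from the two conditions
\[
H^{1}\bigl(S, T_{\mathbb{S}_{14}}|_S\bigr)=0 \qquad\text{and}\qquad H^{0}\bigl(S, T_{\mathbb{S}_{14}}|_S\bigr)\longrightarrow H^{0}\bigl(S,(\mathcal U\otimes L)|_S\bigr)\text{ is surjective},
\]
the second of which, by another chase on the same sequence, is implied once the cohomology transfer from $\mathbb{S}_{14}$ to $S$ for $T_{\mathbb{S}_{14}}$ behaves well in degrees $0$ and $1$.

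Next I would invoke the Koszul resolution associated with the regular section $s\in H^{0}(\mathbb{S}_{14},\mathcal U\otimes L)$ cutting out $S$, namely
\[
0\longrightarrow \wedge^{7}(\mathcal U\otimes L)^{\ast}\longrightarrow \cdots\longrightarrow (\mathcal U\otimes L)^{\ast}\longrightarrow \mathcal O_{\mathbb{S}_{14}}\longrightarrow \mathcal O_{S}\longrightarrow 0,
\]
twist it successively by $T_{\mathbb{S}_{14}}$ and by $\mathcal U\otimes L$, and then break the twisted resolutions into short exact sequences. The cohomology of $F|_S$ for a homogeneous bundle $F$ on $\mathbb{S}_{14}$ is thus controlled by the groups $H^{\bullet}\bigl(\mathbb{S}_{14}, F\otimes\wedge^{k}(\mathcal U\otimes L)^{\ast}\bigr)$ for $0\le k\le 7$. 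Since $\mathcal U$, $L$, and $T_{\mathbb{S}_{14}}$ are all irreducible (or direct sums of irreducible) $\Spin_{14}$-homogeneous bundles associated to explicit $P_7$-weights, each such cohomology group can be computed by the Borel--Weil--Bott theorem: one twists the corresponding highest weight by $\rho$, checks whether the shifted weight is regular, and if so reads off a single cohomology group in the degree equal to the length of the Weyl element bringing it into the dominant chamber. I expect almost all relevant weights to be singular (hence to contribute nothing), with the few dominant contributions matching exactly between $H^{0}(T_{\mathbb{S}_{14}})$ and $H^{0}(\mathcal U\otimes L)$.

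The main obstacle is purely bookkeeping: one must list the weights of $T_{\mathbb{S}_{14}}\otimes\wedge^{k}(\mathcal U\otimes L)^{\ast}$ for $k=0,\dots,7$, and likewise for $(\mathcal U\otimes L)\otimes\wedge^{k}(\mathcal U\otimes L)^{\ast}$, and verify case by case that after the $\rho$-shift they either land on a wall of the Weyl chamber or, when regular, yield contributions only in cohomological degrees that do not obstruct the desired vanishing. Once this Bott tabulation is carried out, the vanishing $H^{1}(S, T_{\mathbb{S}_{14}}|_S)=0$ and the surjectivity $H^{0}(S,T_{\mathbb{S}_{14}}|_S)\twoheadrightarrow H^{0}(S,N_{S/\mathbb{S}_{14}})$ follow, and combining them with the normal bundle sequence yields $H^{1}(S,T_S)=0$, as required.
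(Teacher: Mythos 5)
Your plan is correct and follows essentially the same route as the paper, which likewise reduces local rigidity via Kodaira--Spencer to the vanishing $H^1(S,T_S)=0$ and obtains it from the normal bundle sequence for $S\subset\mathbb{S}_{14}$, the Koszul resolution attached to the regular section of $\mathcal U\otimes L$, and a Borel--Weil--Bott computation (the paper itself only sketches this and defers the weight bookkeeping to Proposition~5 of Manivel's paper, just as you defer it).
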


\section{Global deformation rigidity} 

\subsection{Deformation rigidity of variety of minimal rational tangents}
Let $X$ be a uniruled projective manifold. 
The \emph{variety of minimal rational tangents} is defined as the subvariety of the projectivized tangent bundle $\mathbb P (T_X)$ consisting of tangent directions of immersed minimal rational curves in a uniruled projective manifold $X$ (see \cite{HM99} and \cite{Hw00}).

By a \emph{parameterized rational curve} we mean that a non-constant holomorphic map $f \colon \mathbb P^1 \to X$ from the projective line $\mathbb P^1$ into $X$. 
We say that a (parameterized) rational curve $f \colon \mathbb P^1 \to X$ is \emph{free} if the pullback $f^* T_X$ of the tangent bundle is nonnegative. 
Let $\mathcal K$  be a connected component of the scheme of rational curves and we call $\mathcal K$ a {\it minimal rational component} if $\mathcal K_x$ is proper and nonempty for general $x \in X$. 
Let $f \colon \mathbb P^1 \rightarrow X$ be a minimal rational curve with $f(o)=x$ which is a member of $\mathcal K_x$ smooth at $x$.
Define the \emph{(rational) tangent  map} $\tau_x \colon \mathcal K_x \dashrightarrow \mathbb P(T_xX)$ by $\tau_x([f(\mathbb P^1)])=[df(T_o\mathbb P^1)]$ sending $[f]$ to its tangent direction at $x$.
For a general point $x \in X$, 
by Theorem 3.4 of \cite{Kebekus}, this tangent map induces a morphism 
$\tau_x \colon \mathcal K_x \rightarrow \mathbb P(T_xX)$
which is finite over its image.

\begin{definition}
Let $X$ be a uniruled projective manifold with a minimal rational component $\mathcal K$.
For a general point $x \in X$, the image
$\mathcal C_x(X) :=\tau_x(\mathcal K_x)\subset \mathbb P(T_xX)$ of the tangent map
is called the {\it variety of minimal rational tangents}
(to be abbreviated as VMRT) of $X$ at $x$. 
The union of $\mathcal C_x$ over general points $x\in X$ gives the fibration  $\mathcal C \subset \mathbb P(T_X)$ on $X$ except for a bad locus. 
This is called the {\it variety of minimal rational tangents associated to $\mathcal K$}. 
\end{definition}

For the wonderful compactification of $G_2$, its VMRT is known as the adjoint variety $G_2^{ad} \subset \mathbb P(\mathfrak g_2)$ by \cite{BF15}. 
Since the wonderful compactification of $G_2$ is obtained from the double Cayley Grassmannian $S$ by blowing up along the closed orbit, the VMRT of $S$ must contain the adjoint variety $G_2^{ad}$.
We note that the tangent space $T_s S$ for a general point $s \in S$ is identified with $\mathfrak g_2 = \mathbb C^{14}$ and the adjoint action of $G_2$ on $\mathfrak g_2$ is the isotropy representation. 

\begin{proposition}[Proposition~19 of \cite{Manivel19}]
\label{VMRT of S}
For a general point $s \in S$, the VMRT $\mathcal C_s(S)$ of the double Cayley Grassmannian $S$ is projectively equivalent to the adjoint variety $G_2/P_2 = G_2^{ad} \subset \mathbb P (\mathfrak g_2) =\mathbb P(T_s S)\cong \mathbb P^{13}$ of dimension five, 
which is the projectivization of the minimal nilpotent orbit under the adjoint action. 
Here, $P_2$ is the maximal parabolic subgroup associated with the long simple root $\alpha_2$ of the exceptional Lie group $G_2$. 
\end{proposition}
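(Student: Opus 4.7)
The plan is to combine a dimension count based on the Fano index with the $G_2$-equivariance of the VMRT. The key preliminary observation, already recorded in the excerpt, is that blowing up $S$ along its closed orbit yields the wonderful compactification of $G_2$, whose VMRT was shown to be the adjoint variety $G_2^{ad}$ in \cite{BF15}. I would begin by noting that a general minimal rational curve on the wonderful compactification avoids the exceptional divisor and therefore descends to a rational curve on $S$ through a general point, which already gives the inclusion $\mathcal C_s(S) \supset G_2^{ad}$. The remaining task is to establish the reverse inclusion, and I would do so by pinning down both the dimension of $\mathcal C_s(S)$ and its homogeneity under the isotropy action.

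For the dimension, I would first identify the minimal rational component on $S$. By Corollary~\ref{Fano index}, $S$ is a prime Fano variety of index $7$, so $\Pic(S)=\Z\cdot L$ with $-K_S = 7L$, where $L$ is the polarization inherited from the minimal embedding $\mathbb S_{14}\hookrightarrow \mathbb P^{63}$. Every curve $C\subset S$ therefore satisfies $-K_S\cdot C \in 7\Z_{\geq 1}$, with equality $-K_S\cdot C = 7$ precisely when $C$ is a line in the spinor embedding. The curves descending from the wonderful compactification are of this type, so I would take the minimal rational component $\mathcal K$ to be the family of such lines on $S$. For a line $C$ through a general $s$, standard deformation theory gives $\dim\mathcal K_s = -K_S\cdot C - 2 = 5$, and since the tangent map $\tau_s$ is finite onto its image by \cite{Kebekus}, I would conclude that $\dim\mathcal C_s(S) = 5$ as well.

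For the equivariance, I would observe that the isotropy subgroup at a general point $s\in S$ of the $(G_2\times G_2)$-action is the diagonal $G_2$, acting on $T_s S \cong \mathfrak g_2$ via the adjoint representation. Hence $\mathcal C_s(S)\subset \mathbb P(\mathfrak g_2)$ is a closed $5$-dimensional $G_2$-invariant subvariety containing $G_2^{ad}$, and since $G_2$ is connected, each irreducible component is itself $G_2$-invariant. I would then finish via the classification of $G_2$-orbits on $\mathfrak g_2$: the projectivized minimal nilpotent orbit $G_2^{ad}$ is the unique closed $G_2$-orbit in $\mathbb P(\mathfrak g_2)$, and it is $5$-dimensional; the projective closures of the remaining (subregular, regular nilpotent, and semisimple) orbits all have dimension at least $7$. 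Consequently, any $5$-dimensional closed irreducible $G_2$-invariant subvariety of $\mathbb P(\mathfrak g_2)$ must coincide with $G_2^{ad}$, yielding the desired identification $\mathcal C_s(S) = G_2^{ad}$.

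The hard part, I expect, is the descent argument underlying the existence of the minimal rational component: one must verify carefully that a general minimal rational curve on the wonderful compactification $\tilde S$ is disjoint from the exceptional divisor of $\tilde S \to S$, so that its image genuinely realizes a line on $S$ through a general point and meets the dimensional profile predicted by the index. One could alternatively construct lines directly from the embedding $S\subset \mathbb S_{14}$ via Proposition~\ref{Description of the double Cayley Grassmannian}, but tracking the component through a general point would still require concrete verification. Once this descent is secure, the index bound and the $G_2$-orbit classification combine cleanly to produce the identification $\mathcal C_s(S) = G_2^{ad}$.
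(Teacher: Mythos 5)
The paper does not actually prove this proposition: it is quoted as Proposition~19 of \cite{Manivel19}, and the surrounding text only records two of the ingredients you use, namely the containment $\mathcal C_s(S) \supset G_2^{ad}$ coming from the blow-up relation with the wonderful compactification of $G_2$ (via \cite{BF15}), and the identification of the isotropy representation on $T_sS$ with the adjoint representation of $G_2$ on $\mathfrak g_2$. Your proposal assembles these into a self-contained argument of a different flavour from Manivel's, who works directly with lines inside the spinor embedding $S \subset \mathbb S_{14}$: you combine the index computation from Corollary~\ref{Fano index} (minimal rational curves are lines, $\dim \mathcal K_s = -K_S\cdot C - 2 = 5$, hence $\dim \mathcal C_s(S) = 5$ by finiteness of the tangent map) with invariance under the diagonal isotropy copy of $G_2$ and the orbit structure of $\mathbb P(\mathfrak g_2)$. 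This route is essentially correct, and in fact once you know $\dim \mathcal C_s(S) = 5$ and $G_2$-invariance you do not even need the containment $\supset G_2^{ad}$: every nonzero element of $\mathfrak g_2$ has centralizer of dimension at most $8$, so $G_2^{ad}$ is the unique $G_2$-orbit in $\mathbb P(\mathfrak g_2)$ of dimension at most $5$, and any closed irreducible invariant $5$-fold must equal it. Two caveats. First, your orbit bookkeeping is slightly off: the projectivized $\tilde A_1$ nilpotent orbit has dimension $7$, but the semisimple orbit with centralizer of type $A_2$ projectivizes to a $6$-dimensional orbit, so ``at least $7$'' should read ``strictly greater than $5$''; the conclusion is unaffected. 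Second, and more substantively, everything hinges on the existence of lines through a general point of $S$, equivalently on the descent claim that a general minimal rational curve on the wonderful compactification misses the contracted boundary divisor. You correctly flag this as the hard part; it is precisely the content that the present paper outsources to \cite{BF15} and \cite{Manivel19}, so a complete proof would need either that verification or a direct construction of lines from Proposition~\ref{Description of the double Cayley Grassmannian}.
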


\begin{corollary} \label{VMRT}
Let $\pi \colon \mathcal X \rightarrow \Delta$ be a smooth projective morphism from a complex manifold $\mathcal X$
to the unit disc $\Delta\subset\mathbb C$. 
Suppose for any $t\in \Delta \backslash \{0\}$,
the fiber $\mathcal X_t=\pi^{-1}(t)$ is biholomorphic to the double Cayley Grassmannian $S$.
Then the VMRT of the central fiber $\mathcal X_0$ at a general point $x$ is projectively equivalent to the adjoint variety $G_2^{ad} \subset \mathbb P^{13}$.
\end{corollary}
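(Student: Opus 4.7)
The plan is to propagate the variety of minimal rational tangents from the general fibers to the central fiber using the standard deformation theory of minimal rational curves developed by Hwang--Mok, and then to conclude by invoking the projective rigidity of the $G_2$-adjoint variety. By Corollary~\ref{Fano index}, $S$ is a Fano manifold of Picard number one and index $7$. Both properties are inherited by the central fiber: the second Betti number is constant in the smooth family by Ehresmann's theorem, while the Hodge decomposition $h^{2,0}=h^{0,2}=0$ of Fano manifolds, together with openness of the ample cone and constancy of the anticanonical degree, force $\mathcal X_0$ to be Fano of index $7$ with $\rho(\mathcal X_0)=1$, hence uniruled.

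For each $t\in\Delta$, the fiber $\mathcal X_t$ admits a minimal rational component $\mathcal K_t$ whose members are smooth rational curves of anticanonical degree $7$. Working with the relative $\Hom$-scheme associated with $\pi$, one obtains a proper relative minimal rational component $\mathcal K \to \Delta$ whose fiber over $t \neq 0$ is $\mathcal K_t$ and whose central fiber produces a minimal rational component on $\mathcal X_0$. For a general $x \in \mathcal X_t$, the tangent map $\tau_x : \mathcal K_{t,x} \to \mathbb P(T_x \mathcal X_t)$ is finite onto its image $\mathcal C_x(\mathcal X_t)$. These fit together into a relative VMRT $\mathcal C \subset \mathbb P(T_{\mathcal X/\Delta})$ defined on a Zariski-open subset of $\mathcal X$. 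Using constancy of the anticanonical degree, dimension, and Hilbert polynomial in the family, one checks that the VMRT at a general point $x \in \mathcal X_0$ is a flat projective degeneration, inside $\mathbb P(T_x \mathcal X_0) \cong \mathbb P^{13}$, of the VMRTs at nearby points, each of which is projectively equivalent to $G_2^{ad}$ by Proposition~\ref{VMRT of S}.

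To conclude, I invoke the projective rigidity of $G_2^{ad} \subset \mathbb P(\mathfrak g_2) \cong \mathbb P^{13}$: since $G_2/P_2$ is a rational homogeneous manifold of Picard number one distinct from the exceptional case $\text{OGr}(2,7)$, the Hwang--Mok deformation rigidity results \cite{HwM04b, HwM05} imply in particular that any flat projective degeneration of $G_2^{ad}$ inside $\mathbb P^{13}$ is projectively equivalent to $G_2^{ad}$ itself. Applying this to the flat limit obtained in the previous step shows that $\mathcal C_x(\mathcal X_0)$ is projectively equivalent to $G_2^{ad}$, as required.

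The main obstacle is to verify that the relative VMRT genuinely extends across $t = 0$ as a flat family of projective subvarieties of $\mathbb P^{13}$, rather than merely producing the correct cycle-theoretic limit, and to ensure that the tangent map on the central fiber is defined at a general point with image of the expected dimension. These points follow from now-standard arguments in Hwang--Mok theory (properness of the relative minimal rational component, freeness of the limit curves, and finiteness of the tangent map on a general fiber), but they require careful verification in the present setting, as does the precise form of projective rigidity of $G_2^{ad}$ invoked in the last step.
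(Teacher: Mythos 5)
There is a genuine gap in your final step, and it hides the actual crux of the argument. The statement you invoke --- that ``any flat projective degeneration of $G_2^{ad}$ inside $\mathbb P^{13}$ is projectively equivalent to $G_2^{ad}$ itself'' --- is not what \cite{HwM04b} or \cite{HwM05} prove. Those results concern rigidity of rational homogeneous spaces as \emph{abstract} complex manifolds in smooth (K\"ahler/Fano) families; a flat limit of embedded projective subvarieties of $\mathbb P^{13}$ is a completely different object, which a priori need not be reduced, irreducible, smooth, or linearly nondegenerate (recall that even the Veronese surface degenerates flatly to a cone over a rational normal curve). In particular, the possibility that $\mathcal C_x(\mathcal X_0)$ sits inside a hyperplane of $\mathbb P(T_x\mathcal X_0)$ is never excluded by your argument, and this nondegeneracy is precisely where the paper does real work: it assumes the linear span $W_x$ is proper, notes that the variety of tangential lines of $\widehat{G_2^{ad}}$ is nondegenerate in $\wedge^2\mathfrak g_2$ (Lemma~2.12 of \cite{FuLi}), deduces from Proposition~9 of \cite{HwM98} that the distribution $W$ would be integrable, and contradicts this with Proposition~13 of \cite{HwM98} because $G_2^{ad}$ is a homogeneous contact manifold of Picard number one. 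None of this is replaceable by a citation to abstract deformation rigidity.

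Your second step also skates over a real difficulty: the VMRT is the \emph{image} of the tangent map, and images do not pass to flat limits --- the cycle-theoretic limit of the $\mathcal C_{\sigma(t)}(\mathcal X_t)$ need not coincide with $\mathcal C_x(\mathcal X_0)$, which is defined intrinsically from the minimal rational component of $\mathcal X_0$. The paper avoids this entirely by working upstream with the parameter spaces: the normalized Chow spaces $\mathcal K_{\sigma(t)}$ form a smooth projective family over $\Delta$ (as in Proposition~4 of \cite{HwM98}), to which the main theorem of \cite{Hw97} --- rigidity of homogeneous contact manifolds under Fano deformation --- applies directly, giving $\mathcal K_{\sigma(0)}\cong G_2^{ad}$ as an abstract variety. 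Only then does the question reduce to understanding the tangent map on the central fiber, which is settled by the nondegeneracy argument above. If you want to repair your proof, you should replace the ``flat degeneration plus projective rigidity'' step by this two-stage argument: rigidity of the parameter space as an abstract Fano deformation, followed by a separate proof that the tangent map of the central fiber is linearly nondegenerate.
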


\begin{proof}
Choose a section $\sigma \colon \Delta \rightarrow \mathcal X$ of $\pi$ such that $\sigma(0)=x$ and $\sigma(t)$ passes through a general point in $\mathcal X_t$ for $t \neq 0$. 
Let $\mathcal K_{\sigma(t)}$ be the normalized Chow space of minimal rational curves passing through $\sigma(t)$ in $\mathcal X_t$. 
Then the canonical map $\mu \colon \mathcal K_{\sigma} \rightarrow \Delta$ given by the family $\{ \mathcal K_{\sigma(t)} \}$ 
is smooth and projective by the same proof as that of Proposition 4 of \cite{HwM98}. 
The main theorem of \cite {Hw97} says that 
$\mathcal K_{\sigma(t)}$ is isomorphic to $\mathcal K_{s}(S)$ for all $t \in \Delta$ 
because $\mathcal K_{s}(S) \cong \mathcal C_{s}(S)$ is a homogeneous contact manifold $G_2^{ad}$ by Proposition \ref{VMRT of S}. 
Thus it suffices to show that the image of the tangent map for the central fiber is nondegenerate in $\mathbb P(T_x \mathcal X_0)$, that is, the image is not contained in any hyperplane of the projective space $\mathbb P(T_x \mathcal X_0)$.

Assume that the VMRT $\mathcal C_x (\mathcal X_0)$ at a general point $x$ is degenerate in $\mathbb P(T_x \mathcal X_0)$ and let $W$ be a distribution defined by the linear span $W_x$ of $\mathcal C_x (\mathcal X_0)$. 
The adjoint variety $G_2^{ad} \subset \mathbb P (\mathfrak g_2) \cong \mathbb P^{13}$ is linearly nondegenerate and hence, the variety of tangential lines $\mathfrak T_{\widehat G_2^{ad}} \subset \wedge^2 \mathfrak g_2$ is nondegenerate by Lemma 2.12 of \cite{FuLi}. 
Since $\mathcal K_{\sigma(t)}$ is a trivial family for all $t \in \Delta$, the variety of tangential lines $\mathfrak T_{\widehat{\mathcal C}_x (\mathcal X_0)}$ is nondegenerate in $\wedge^2 W_x$, which implies that the distribution $W$ is integrable by Proposition~9 of \cite{HwM98}. 
However, since $\mathcal C_x (\mathcal X_0)$ is the homogeneous contact manifold $G_2^{ad}$ of Picard number one, by Proposition~13 of \cite{HwM98}, the distribution $W$ defined by the linear span $W_x$ of $\mathcal C_x (\mathcal X_0)$ cannot be integrable which is a contradiction. 
Hence, the VMRT $\mathcal C_x (\mathcal X_0)$ at a general point $x$ is nondegenerate in $\mathbb P(T_x \mathcal X_0)$.
\end{proof}

\subsection{Isotrivial cone structures given by VMRT}

In this subsection, we focus on the isotrivial cone structure $\mathcal C$ given by VMRT on the central fiber $\mathcal X_0$. We mainly use the prolongation method to prove Proposition \ref{prop:two central fiber}.

\begin{definition}
Let $V$ be a complex vector space and
$\mathfrak g \subset \mathfrak{gl}(V)$ a linear Lie algebra.
For an integer $k\geq 0$, the space $\mathfrak g^{(k)}$,
called the \emph{k-th prolongation} of $\mathfrak g$, is
the vector space of symmetric multi-linear homomorphisms
$A \colon \Sym^{k+1}V \to V$ such that
for any fixed vectors $v_1, \cdots, v_k \in V$,
the endomorphism
$$v\in V \mapsto A(v_1,\cdots, v_k,v)\in V$$
belongs to the Lie algebra $\mathfrak g$.
That is, $\mathfrak g^{(k)}=\Hom(\Sym^{k+1} V, V)\cap\Hom(\Sym^k V, \mathfrak g)$.
\end{definition}

We are interested in the case where a Lie algebra $\mathfrak g$ is relevant to geometric contexts, 
in particular, the Lie algebra of infinitesimal linear automorphisms of the affine cone of an irreducible projective subvariety given by VMRT.

\begin{definition}
Let $Z \subset \mathbb P V$ be an irreducible projective variety.
The {\it projective automorphism group} of
$Z$ is $\Aut(Z)=\{ g\in \PGL(V) : g(Z)=Z \}$ and its Lie algebra is denoted by $\mathfrak{aut}(Z)$.
Denote by $\widehat Z \subset V$ the affine cone of $Z$. 
The {\it Lie algebra of infinitesimal linear automorphisms} of $\widehat{Z}$ is defined by
\begin{align*}
\mathfrak{aut}(\widehat{Z})&
=\{A\in \mathfrak{gl}(V) : \mbox{exp}(t A)(\widehat{Z})\subset \widehat{Z}, \, t\in \mathbb C \}, 
\end{align*}
where $\mbox{exp}(t A)$ denotes the one-parameter group of linear automorphisms of $V$.
Its $k$-th prolongation $\mathfrak{aut}(\widehat{Z})^{(k)}$ will be called the \emph{$k$-th prolongation of $Z \subset \mathbb P V$}.
\end{definition}

The \emph{Lie algebra $\mathfrak{aut}(\mathcal C, x)$ of infinitesimal automorphisms of the cone structure $\mathcal C$ at $x$} is the set of all germs of holomorphic vector fields preserving the cone structure $\mathcal C$ at $x$.

\begin{proposition}[Proposition 5.10 of \cite{FuHw}]\label{aut} 
Let $\mathcal C \subset \mathbb P(T_M)$ be a cone structure on a complex manifold $M$ and $x \in M$.
If $\mathfrak{aut}(\widehat{\mathcal C_x})^{(k+1)} = 0$ for some $k \geq 0$,
then $$\dim \mathfrak{aut}(\mathcal C, x) \leq
\dim M +\dim \mathfrak{aut}(\widehat{\mathcal C_x}) +\dim \mathfrak{aut}(\widehat{\mathcal C_x})^{(1)}+ \cdots + \dim \mathfrak{aut}(\widehat{\mathcal C_x})^{(k)}.$$
\end{proposition}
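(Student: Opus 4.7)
The plan is a standard jet-space argument combined with the prolongation formalism. Choose local holomorphic coordinates on $M$ centered at $x$, which trivialize $T_M$ in a neighborhood of $x$, and consider the filtration
$$\mathfrak{aut}(\mathcal C, x) = \mathfrak F_0 \supset \mathfrak F_1 \supset \mathfrak F_2 \supset \cdots$$
by order of vanishing at $x$, i.e., $\mathfrak F_j = \{V \in \mathfrak{aut}(\mathcal C, x) : V \text{ vanishes to order } j \text{ at } x\}$. Since a holomorphic vector-field germ with identically vanishing Taylor series at $x$ is zero, $\bigcap_{j\geq 0} \mathfrak F_j = 0$. The goal is to produce injective linear maps
$$\mathfrak F_0/\mathfrak F_1 \hookrightarrow T_xM, \qquad \mathfrak F_1/\mathfrak F_2 \hookrightarrow \mathfrak{aut}(\widehat{\mathcal C_x}), \qquad \mathfrak F_j/\mathfrak F_{j+1} \hookrightarrow \mathfrak{aut}(\widehat{\mathcal C_x})^{(j-1)} \text{ for } j \geq 2,$$
and then invoke the hypothesis $\mathfrak{aut}(\widehat{\mathcal C_x})^{(k+1)} = 0$ to truncate the filtration.

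The map on $\mathfrak F_0/\mathfrak F_1$ is evaluation $V\mapsto V(x)$, and the map on $\mathfrak F_1/\mathfrak F_2$ is $V\mapsto dV|_x \in \mathfrak{gl}(T_xM)$, whose image lies in $\mathfrak{aut}(\widehat{\mathcal C_x})$ because the flow of a vector field vanishing at $x$ fixes $x$ and its linearization must preserve the cone $\widehat{\mathcal C_x}$. For $j \geq 2$, the leading Taylor coefficient of $V\in\mathfrak F_j$ corresponds under the trivialization to a symmetric multilinear map $B_j\colon \Sym^j T_xM \to T_xM$. Expanding the condition $dV|_y \in \mathfrak{aut}(\widehat{\mathcal C_y})$ in a power series in $y-x$ and reading off the leading order (which is of order $j-1$, since $V$ vanishes to order $j$ at $x$) one finds that $B_j(v_1,\ldots,v_{j-1},\cdot)\in \mathfrak{aut}(\widehat{\mathcal C_x})$ for every $v_1,\ldots,v_{j-1}\in T_xM$, which is exactly the prolongation condition $B_j \in \mathfrak{aut}(\widehat{\mathcal C_x})^{(j-1)}$.

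A general lemma then closes the argument: if $\mathfrak g^{(k+1)}=0$, then $\mathfrak g^{(j)}=0$ for every $j\geq k+1$, because contracting any element of $\mathfrak g^{(j)}$ with one vector produces an element of $\mathfrak g^{(j-1)}$, and iterating reduces to $\mathfrak g^{(k+1)}$. Applied here, this gives $\mathfrak F_j/\mathfrak F_{j+1} = 0$ for all $j\geq k+2$, so $\mathfrak F_{k+2} = \mathfrak F_{k+3} = \cdots$; combined with $\bigcap_j \mathfrak F_j = 0$, this forces $\mathfrak F_{k+2}=0$. Summing the dimensions of the nonzero graded pieces $\mathfrak F_0/\mathfrak F_1,\ldots,\mathfrak F_{k+1}/\mathfrak F_{k+2}$ against the bounds produced above yields exactly the desired inequality.

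The main obstacle is the prolongation identification in the second paragraph: the infinitesimal-automorphism condition a priori couples all Taylor coefficients of $V$ through the variation of the fiber $\mathcal C_y$ with $y$, and one must verify that when restricted to $V\in\mathfrak F_j$ this condition decouples order-by-order so that only the leading symbol $B_j$ carries a genuine obstruction and that obstruction is precisely the $(j-1)$-th prolongation condition at the distinguished fiber $\widehat{\mathcal C_x}$. Once this is set up, the remaining steps are formal linear algebra.
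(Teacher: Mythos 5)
The paper offers no proof of this proposition---it is imported verbatim as Proposition~5.10 of \cite{FuHw}---so there is nothing internal to compare against. Your filtration-by-vanishing-order argument, with the graded injections $\mathfrak F_0/\mathfrak F_1 \hookrightarrow T_xM$, $\mathfrak F_1/\mathfrak F_2 \hookrightarrow \mathfrak{aut}(\widehat{\mathcal C_x})$ and $\mathfrak F_j/\mathfrak F_{j+1} \hookrightarrow \mathfrak{aut}(\widehat{\mathcal C_x})^{(j-1)}$, is precisely the argument Fu and Hwang give there, and it is correct: the decoupling you flag as the main obstacle does go through because all Taylor coefficients of $V \in \mathfrak F_j$ below order $j$ vanish, killing every term in which the $y$-variation of the fibers $\mathcal C_y$ couples to lower-order data, so only the leading symbol $B_j$ survives and the surviving condition is exactly membership in the $(j-1)$-st prolongation at the fixed fiber $\widehat{\mathcal C_x}$.
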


If the above equality holds, the isotrivial cone structure $\mathcal C$ given by VMRT on $M$ should be locally flat by Corollary~5.13 of \cite{FuHw} and $M$ is an equivariant compactification of the vector group of dimension $\dim M$ from Theorem~1.2 of \cite{FuHw2018-2}. 

\begin{definition}
A vector group $\mathbb G_a^n$ of dimension $n$ is the $n$-fold direct product of the additive group $\mathbb G_a$ of the complex number field $\mathbb C$. 
An \emph{equivariant compactification} of the vector group $\mathbb G_a^n$ is a smooth projective $\mathbb G_a^n$-variety with an open orbit isomorphic to $\mathbb G_a^n$.  
\end{definition}

\begin{example}[Propositions~5.2 and 5.5 of \cite{HT99}]\label{ex: rank 2 vector group} 
When $n=2$, there exist three kinds of the minimal model of equivariant compactifications of the vector group $\mathbb G_a^2$, that is $\mathbb P^2$, $\mathbb P^1 \times \mathbb P^1$, and a Hirzebruch surface $S(1,k)$ for $k>1$. 
For $\mathbb P^2$, there exist two different equivariant compactification structures. 
For $\mathbb P^1 \times \mathbb P^1$, there exists a unique equivariant compactification structure arising from the unique equivariant compactification structure of $\mathbb G_a^1$ on $\mathbb P^1$. 
For $S(1,k)$, there exist two different equivariant compactification structures. 
\end{example}

\begin{proposition}\label{prop:two central fiber}
Let $\pi \colon \mathcal X \rightarrow \Delta$ be a smooth projective morphism from a complex manifold $\mathcal X$
to the unit disc $\Delta\subset\mathbb C$. 
Suppose for any $t\in \Delta \backslash \{0\}$,
the fiber $\mathcal X_t=\pi^{-1}(t)$ is biholomorphic to the double Cayley Grassmannian $S$.
Then the central fiber $\mathcal X_0$ is biholomorphic to either $S$ or an equivariant compactification of the vector group $\mathbb G_a^{14}$.
\end{proposition}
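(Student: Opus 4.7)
The plan is to apply Proposition~\ref{aut} to the isotrivial cone structure $\mathcal C \subset \mathbb P(T_{\mathcal X_0})$ given by VMRT, producing a dichotomy whose two branches correspond to the two possibilities stated. First, using Corollary~\ref{VMRT} together with the properness of the relative minimal rational component over $\Delta$ and the fact that each nearby fiber $\mathcal X_t \cong S$ is homogeneous at general points, the cone structure $\mathcal C$ on $\mathcal X_0$ is isotrivial at general points $x$, with fiber projectively equivalent to the adjoint variety $G_2^{ad} \subset \mathbb P^{13}$.

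The second step is to compute $\mathfrak{aut}(\widehat{G_2^{ad}})$ and its prolongations. Since $G_2^{ad}$ is the projectivization of the minimal nilpotent orbit in $\mathfrak g_2$, a standard argument identifies $\mathfrak{aut}(\widehat{G_2^{ad}}) = \mathfrak g_2 \oplus \mathbb C \cdot \mathrm{id}$, of dimension $15$. The higher prolongations $\mathfrak{aut}(\widehat{G_2^{ad}})^{(k)}$ are then analyzed via the contact $|2|$-grading of $\mathfrak g_2$ using the representation-theoretic techniques of \cite{FuHw}; verifying that they vanish from some order $k_0$ on yields a finite quantity
\[
N \;:=\; \dim \mathcal X_0 + \dim \mathfrak{aut}(\widehat{G_2^{ad}}) + \sum_{k \geq 1} \dim \mathfrak{aut}(\widehat{G_2^{ad}})^{(k)},
\]
and Proposition~\ref{aut} then supplies the bound $\dim \mathfrak{aut}(\mathcal C, x) \leq N$ at a general point $x$.

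The dichotomy is extracted by comparing this upper bound with the lower bound
$\dim \mathfrak{aut}(\mathcal C, x) \geq \dim \Aut^0(\mathcal X_0) \geq \dim \Aut^0(S) = 28$,
which comes from the upper semicontinuity of $h^0(\mathcal X_t, T_{\mathcal X_t})$ along $\pi$ together with the observation that $\Aut^0(\mathcal X_0)$ preserves the unique minimal rational component and therefore embeds into $\mathfrak{aut}(\mathcal C,x)$ as germs at $x$. If equality is attained in Proposition~\ref{aut}, then by Corollary~5.13 of \cite{FuHw} the cone structure is locally flat, and Theorem~1.2 of \cite{FuHw2018-2} identifies $\mathcal X_0$ with an equivariant compactification of the vector group $\mathbb G_a^{14}$. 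In the strict-inequality case the cone structure is not flat, and I would invoke the Cartan--Fubini type extension theorem of Hwang--Mok: the isotrivial cone structure on $\mathcal X_0$ has the same nondegenerate irreducible model $G_2^{ad}$ (linear nondegeneracy being already verified in the proof of Corollary~\ref{VMRT}) as the cone structure on $\mathcal X_t \cong S$ for $t \neq 0$, and a local $\mathcal C$-preserving biholomorphism between $\mathcal X_0$ and $S$ along a section of $\pi$ extends to a global biholomorphism $\mathcal X_0 \cong S$.

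The main obstacle I anticipate is the explicit determination of the prolongations $\mathfrak{aut}(\widehat{G_2^{ad}})^{(k)}$ for $k \geq 1$, because the precise value of $N$ governs the effectiveness of the dichotomy; although the general framework of \cite{FuHw} reduces this to a representation-theoretic calculation using the contact grading of $\mathfrak g_2$, one must rule out spurious non-vanishing contributions from the long-root grading pieces. A secondary obstacle is the verification of the hypotheses of the Cartan--Fubini extension theorem in the strict-inequality case, in particular the invariance of Picard number one of $\mathcal X_0$ (which follows from deformation invariance of Hodge numbers for smooth projective families) and the non-integrability of the linear span distribution of the VMRT on $\mathcal X_0$.
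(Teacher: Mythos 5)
Your overall architecture (isotrivial cone structure with fiber $G_2^{ad}$, computation of $\mathfrak{aut}(\widehat{G_2^{ad}})\cong\mathfrak g_2\oplus\mathbb C$, vanishing of prolongations, and the equality case of Proposition~\ref{aut} yielding local flatness and hence a $\mathbb G_a^{14}$-compactification via Corollary~5.13 of \cite{FuHw} and Theorem~1.2 of \cite{FuHw2018-2}) matches the paper. The prolongation vanishing you worry about is not an obstacle: since $\mathcal C_s(S)\cong G_2^{ad}$ is irreducible, smooth, nondegenerate and not linear, Proposition~1.1.2 of \cite{HwM05} gives $\mathfrak{aut}(\widehat{\mathcal C_s})^{(k)}=0$ for $k\geq 2$, and the classification of varieties with nonzero prolongation in \cite{FuHw18} kills $k=1$ as well, so $N=14+15=29$.

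The genuine gap is in your strict-inequality branch. To apply the Cartan--Fubini extension theorem you must first exhibit a \emph{local biholomorphism} between open subsets of $\mathcal X_0$ and of $S$ that intertwines the two cone structures; the mere fact that the fibers $\mathcal C_x(\mathcal X_0)$ and $\mathcal C_s(S)$ are projectively equivalent to the same model $G_2^{ad}$ does not produce such a map --- establishing local equivalence of the cone structures is precisely the hard part of the Hwang--Mok program, and it is exactly what fails to be automatic when the structure is not known to be locally flat. As written, this step does not close. The paper avoids it entirely: from $\mathfrak{aut}(\mathcal X_0)\subset\mathfrak{aut}(\mathcal C,x)$ and Proposition~\ref{aut} one gets $h^0(\mathcal X_0,T_{\mathcal X_0})\leq h^0(S,T_S)+1=29$; since $\chi(X,T_X)=h^0(X,T_X)-h^1(X,T_X)$ is constant in the family (Fano vanishing kills $h^{\geq 2}$) and $h^1(S,T_S)=0$ by Proposition~\ref{Local deformation rigidity of the double Cayley Grassmannian}, one obtains $h^1(\mathcal X_0,T_{\mathcal X_0})=h^0(\mathcal X_0,T_{\mathcal X_0})-h^0(S,T_S)\leq 1$. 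If this is $0$, Kodaira--Spencer local rigidity of $\mathcal X_0$ forces $\mathcal X_0\cong\mathcal X_t\cong S$; if it is $1$, all the inequalities are equalities and your flatness argument applies. You already have the semicontinuity bound $h^0(\mathcal X_0,T_{\mathcal X_0})\geq 28$ in hand; replace the Cartan--Fubini appeal with the Euler-characteristic/local-rigidity argument and the proof closes.
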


\begin{proof}
The argument of the proof is the same as that for Theorem~1.1 of \cite{KP19}. 
First, the identity component $\Aut^0(S)$ of the automorphism group is isomorphic to $G_2 \times G_2$ by Lemma~16 of \cite{Ruzzi2010}. 
From Corollary~\ref{VMRT}, we can compute the Lie algebras of infinitesimal automorphisms of the affine cones of VMRTs: 
$\mathfrak{aut}(\widehat{\mathcal C_x(\mathcal X_0)}) = \mathfrak{aut}(\widehat{\mathcal C_s(S)}) = \mathfrak{aut}(\widehat{G_2^{ad}})
\cong \mathfrak g_2 \oplus \mathbb C$. 
Thus, for the cone structure $\mathcal C$ on a fiber $\mathcal X_t$ given by its VMRT 
we have equalities:  
$$\dim \mathfrak{aut}(S) + 1 
= \dim S+\dim \mathfrak{aut}(\widehat{\mathcal C_s})
= \dim \mathcal X_0 +\dim \mathfrak{aut}(\widehat{\mathcal C_x}).$$
Since the variety $\mathcal C_s(S)$ of minimal rational tangents of $S$ is irreducible smooth nondegenerate and not linear,
$\mathfrak{aut}(\widehat{\mathcal C_s})^{(k)} = 0$ for $k\geq 2$ 
by Proposition~1.1.2 of \cite{HwM05}.  
Furthermore, the classification of projective varieties with non-zero prolongation in \cite{FuHw18} implies that $\mathfrak{aut}(\widehat{\mathcal C_s})^{(k)}=0$ for all $k \geq 1$.

Because the Lie algebra $\mathfrak{aut}(\mathcal X_0)$ is isomorphic to the space $H^0(\mathcal X_0, T_{\mathcal X_0})$ of global sections of the tangent bundle $T_{\mathcal X_0}$,
we know $h^0(\mathcal X_0, T_{\mathcal X_0})=\dim \mathfrak{aut}(\mathcal X_0)$. 
Since the action of $\Aut(\mathcal X_0)$ preserves the VMRT-structure $\mathcal C$ on $\mathcal X_0$, 
we have an inclusion $\mathfrak{aut}(\mathcal X_0) \subset \mathfrak{aut}(\mathcal C, x)$. 
Hence, from Proposition~\ref{aut} we have inequalities
\begin{eqnarray*}
h^0(\mathcal X_0, T_{\mathcal X_0})
= \dim \mathfrak{aut}(\mathcal X_0)  & \leq & \dim \mathfrak{aut}(\mathcal C, x) \\
&\leq& \dim \mathcal X_0
+\dim \mathfrak{aut}(\widehat{\mathcal C_x}) \\
&=&\dim \mathfrak{aut}(S) +1 = h^0(S,T_S) +1.
\end{eqnarray*}
Now, recall that the Euler--Poincar\'e characteristic of the holomorphic tangent bundle $T_X$ on a Fano manifold $X$ is given by $\chi(X,T_X)=h^0(X,T_X)-h^1(X,T_X).$ 
Indeed, the Serre duality and the Kodaira--Nakano vanishing theorem imply that $H^i(X, T_X)=H^{n-i}(X, T^*_X \otimes K_X)^{\vee} = 0$ for $i\geq 2$.
Since the Euler--Poincar\'e characteristic is constant in a smooth family 
and $h^1(S, T_S)=0$ by Proposition~\ref{Local deformation rigidity of the double Cayley Grassmannian}, 
we conclude that $$h^1(\mathcal X_0, T_{\mathcal X_0}) = h^0(\mathcal X_0, T_{\mathcal X_0}) - h^0(S, T_S)\leq 1.$$

Suppose that the above equality holds. 
Then we have $\dim \mathfrak{aut}(\mathcal C, x) = \dim \mathcal X_0 + \dim \mathfrak{aut}(\widehat{\mathcal C_x})$, 
which implies that the isotrivial cone structure $\mathcal C$ given by VMRT on the central fiber $\mathcal X_0$ should be locally flat by Corollary~5.13 of \cite{FuHw}. 
Thus $\mathcal X_0$ is an equivariant compactification of the vector group $\mathbb G_a^{14}$ from Theorem~1.2 of \cite{FuHw2018-2}. 
\end{proof}

\begin{corollary}\label{cor: central fiber}
All assumptions are the same as above and assume that $\mathcal X_0$ is not biholomorphic to $S$. 
Then $\mathcal X_0$ is an equivariant compactification of the vector group $\mathbb G_a(\mathfrak g_2)$ in which $\mathfrak g_2$ is viewed as a vector group. 
We also have $\mathfrak{aut}(\mathcal X_0)=\mathbb C^{14} \oplus (\mathfrak g_2 \oplus \mathbb C)$ and $\Aut^0(\mathcal X_0)= \mathbb G_a(\mathfrak g_2) \rtimes (G_2 \times \mathbb C^*)$.
\end{corollary}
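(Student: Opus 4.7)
The plan is to extract the full Lie-algebra and group structure of $\Aut^0(\mathcal X_0)$ from the prolongation equalities already implicit in the proof of Proposition~\ref{prop:two central fiber}. Because $\mathcal X_0 \not\cong S$ while nearby fibers are biholomorphic to $S$, the family $\mathcal X\to\Delta$ cannot be locally trivial near $0$, so $h^1(\mathcal X_0,T_{\mathcal X_0})\geq 1$; combined with the bound $h^1(\mathcal X_0,T_{\mathcal X_0})\leq 1$ established there, equality must hold. Constancy of the Euler characteristic and $h^1(S,T_S)=0$ then force $\dim \mathfrak{aut}(\mathcal X_0)=h^0(S,T_S)+1=29$, and the inequality chain
$$\dim \mathfrak{aut}(\mathcal X_0) \;\leq\; \dim\mathfrak{aut}(\mathcal C,x) \;\leq\; \dim \mathcal X_0 + \dim\mathfrak{aut}(\widehat{\mathcal C_x}) = 14+15 = 29$$
collapses to a chain of equalities, so the inclusion $\mathfrak{aut}(\mathcal X_0)\hookrightarrow \mathfrak{aut}(\mathcal C,x)$ is an isomorphism.

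Since the cone structure $\mathcal C$ on $\mathcal X_0$ is locally flat and $\mathfrak{aut}(\widehat{\mathcal C_x})^{(k)}=0$ for all $k\geq 1$, the local model for $\mathfrak{aut}(\mathcal C,x)$ is the semidirect sum $T_x\mathcal X_0\rtimes \mathfrak{aut}(\widehat{\mathcal C_x})$, in which $T_x\mathcal X_0$ is an abelian ideal acted on by $\mathfrak{aut}(\widehat{\mathcal C_x})$ through its defining linear representation. Substituting $\mathfrak{aut}(\widehat{\mathcal C_x})=\mathfrak g_2\oplus \mathbb C$ and the identification $T_x\mathcal X_0\cong \mathfrak g_2$ furnished by Corollary~\ref{VMRT} yields
$$\mathfrak{aut}(\mathcal X_0)\;\cong\; \mathbb C^{14}\oplus (\mathfrak g_2\oplus \mathbb C),$$
with $\mathfrak g_2$ acting on $\mathbb C^{14}$ via the adjoint representation and $\mathbb C$ by scalar dilation. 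This equivariantly identifies the open orbit with the vector group $\mathbb G_a(\mathfrak g_2)$.

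For the group-level statement I would let $H\subset \Aut^0(\mathcal X_0)$ be the isotropy subgroup at a general point $x$ in the open orbit. Effectiveness of the action on a connected manifold makes the isotropy representation $H^0\hookrightarrow \GL(T_x\mathcal X_0)$ injective; since $H$ preserves the VMRT, its image lies in $\Aut^0(\widehat{\mathcal C_x})=G_2\times \mathbb C^*$, and the dimension count $\dim H=29-14=15=\dim(G_2\times \mathbb C^*)$ forces equality on identity components. The translation subgroup $\mathbb G_a(\mathfrak g_2)$ is normal (this is the abelian ideal from the Lie-algebra decomposition just established) and meets $H$ trivially (it acts freely on its own open orbit), so multiplication gives $\Aut^0(\mathcal X_0)=\mathbb G_a(\mathfrak g_2)\rtimes (G_2\times \mathbb C^*)$. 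The main point requiring care, rather than a genuine obstacle, is the $G_2$-equivariant identification of the vector group with $\mathbb G_a(\mathfrak g_2)$: this proceeds by matching the isotropy linear action on $T_x\mathcal X_0$ (recognized via the VMRT as the adjoint representation) with the conjugation action of the isotropy on the translation subgroup of a vector-group compactification.
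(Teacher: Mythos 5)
Your proposal is correct and follows essentially the same route as the paper: it deduces the vector-group compactification from Proposition~\ref{prop:two central fiber} (you merely make explicit the implicit dichotomy that $h^1(\mathcal X_0,T_{\mathcal X_0})=0$ would force $\mathcal X_0\cong S$), identifies $\mathfrak{aut}(\mathcal X_0)=\mathfrak{aut}(\mathcal C,x)=\mathbb C^{14}\oplus(\mathfrak g_2\oplus\mathbb C)$ via the locally flat cone structure (this is exactly Proposition~5.14 of \cite{FuHw}, which the paper cites), and obtains the semidirect product description of $\Aut^0(\mathcal X_0)$. Your group-level argument via the isotropy subgroup is a slightly more detailed justification of the last step than the paper gives, but it is the same argument in substance.
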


\begin{proof}
By Corollary~\ref{VMRT} and its proof, the VMRT of the central fiber $\mathcal X_0$ at a general point $x$ is projectively equivalent to the adjoint variety $G_2^{ad} \subset \mathbb P(\mathfrak g_2)$. 
If $\mathcal X_0$ is not biholomorphic to $S$, then by Proposition~\ref{prop:two central fiber} we see that $\mathcal X_0$ is an equivariant compactification of the vector group $\mathbb G_a(\mathfrak g_2)$ in which $\mathfrak g_2$ is viewed as a vector group.
Hence, $\mathfrak{aut}(\widehat{\mathcal C_x(\mathcal X_0)}) = \mathfrak g_2 \oplus \mathbb C$ such that $\mathfrak g_2$ acts on $T_x \mathcal X_0=\mathfrak g_2$ by the adjoint action and $\mathbb C$ acts on $T_x \mathcal X_0=\mathbb C^{14}$ by the dilation. 

Moreover, by Proposition~5.14 of \cite{FuHw} we have
$$\mathfrak{aut} (\mathcal X_0) =\mathfrak{aut}({\mathcal C,x}) =\mathbb C^{14} \oplus \mathfrak{aut}(\widehat{\mathcal C_x(\mathcal X_0)}),$$
which implies that $\mathfrak{aut}(\mathcal X_0)=\mathbb C^{14} \oplus (\mathfrak g_2 \oplus \mathbb C)$.
The evaluation map  $\mathfrak{aut} (\mathcal X_0) = H^0(\mathcal X_0, T_{\mathcal X_0}) \rightarrow T_x \mathcal X_0$ sends a global section $\sigma$ of tangent bundle on $\mathcal X_0$ to its value $\sigma(x)$. 
Hence, we have the following:
 $$0 \rightarrow \mathfrak{aut}(\widehat{\mathcal C_x(\mathcal X_0)}) \rightarrow \mathfrak{aut} (\mathcal X_0)  \rightarrow T_x \mathcal X_0 \rightarrow 0,$$ 
that is,
 $$0 \rightarrow \mathfrak g_2 \oplus \mathbb C \rightarrow \mathbb C^{14} \oplus (\mathfrak g_2 \oplus \mathbb C) \rightarrow \mathbb C^{14} \rightarrow 0.$$
Furthermore, since $\mathcal X_0$ is an equivariant compactification of the vector group $\mathbb G_a(\mathfrak g_2)$, we have $\Aut^0(\mathcal X_0)= \mathbb G_a(\mathfrak g_2) \rtimes (G_2 \times \mathbb C^*)$.
\end{proof}

\subsection{Algebraic moment polytopes}

\begin{figure}[h!]
\begin{minipage}[b]{.25 \textwidth}
\centering 
\begin{tikzpicture}
\clip (-2.5,-0.5) rectangle (4.7,6.5); 
\foreach \x  in {-8,-7.5,...,9}{
  \draw[help lines,thick,dotted]
    (\x,-8) -- (\x,9);
}
\begin{scope}[y=(60:1)]

\coordinate (pi1) at (0,1);
\coordinate (pi2) at (-1,2);
\coordinate (v1) at ($7*(pi1)$);
\coordinate (v2) at ($7/2*(pi2)$);
\coordinate (a1) at (1,0);
\coordinate (a2) at (-2,1);
\coordinate (a3) at ($3*(a1)+(a2)$);

\coordinate (Origin) at (0,0);
\coordinate (asum) at ($(a1)+(a2)$);
\coordinate (2rho) at (-2,6);

\coordinate (barycenter) at (1.759-49/15,98/15);

\foreach \x  in {-8,-7,...,9}{
  \draw[help lines,thick,dotted]
    (\x,-8) -- (\x,9)
    (-8,\x) -- (9,\x) 
    [rotate=60] (\x,-8) -- (\x,9) ;
}
\foreach \x  in {-8,-7,...,9}{
  \draw[help lines,thick,dotted]
    (\x,-8) -- (\x,9);
}

\fill (Origin) circle (2pt) node[below left] {0};
\fill (pi1) circle (2pt) node[right] {$\varpi_1$};
\fill (pi2) circle (2pt) node[right] {$\varpi_2$};
\fill (a1) circle (2pt) node[below] {$\alpha_1$};
\fill (a2) circle (2pt) node[above left] {$\alpha_2$};
\fill (a3) circle (2pt) node[below right] {$3\alpha_1+\alpha_2$};

\fill (asum) circle (2pt) node[above] {$\alpha_1+\alpha_2$};
\fill (2rho) circle (2pt) node[below] {$2\rho$};

\fill (v1) circle (2pt) node[right] {$7\varpi_1$};
\fill (v2) node[left] {$\frac{7}{2}\varpi_2$};

\draw[->,blue,thick](Origin)--(pi1);
\draw[->,blue,thick](Origin)--(pi2); 
\draw[->,blue,thick](Origin)--(a1);
\draw[->,blue,thick](Origin)--(a2);
\draw[->,blue,thick](Origin)--(a3); 
\draw[->,blue,thick](Origin)--(asum); 

\draw[thick](Origin)--(v1);
\draw[thick](Origin)--(v2);
\draw[thick](v1)--(v2);

\aeMarkRightAngle[size=6pt](Origin,v2,v1)

\end{scope}
\end{tikzpicture} 
\end{minipage}
\caption{Algebraic moment polytope of the anticanonical bundle of $S$.}
\label{algebraic moment polytope}

\vskip 1em 

\begin{minipage}[b]{.5 \textwidth}
\centering
\begin{tikzpicture}
\clip (-4.7,-4.1) rectangle (4.7,4); 
\foreach \x  in {-8,-7.5,...,9}{
  \draw[help lines,thick,dotted]
    (\x,-8) -- (\x,9);
}
\begin{scope}[y=(60:1)]

\coordinate (pi1) at (0,0.5);
\coordinate (pi2) at (-0.5,1);
\coordinate (v1) at ($7*(pi1)$);
\coordinate (v2) at ($7/2*(pi2)$);
\coordinate (a1) at (0.5,0);
\coordinate (a2) at (-1,0.5);
\coordinate (a3) at ($3*(a1)+(a2)$);
\coordinate (v3) at ($7*(a1)+7*(a2)$);
\coordinate (v4) at ($-7*(a1)$);
\coordinate (v5) at ($-7*(pi1)$);
\coordinate (v6) at ($-7*(a1)-7*(a2)$);
\coordinate (v7) at ($7*(a1)$);

\coordinate (Origin) at (0,0);
\coordinate (asum) at ($(a1)+(a2)$);
\coordinate (2rho) at (-1,3);

\foreach \x  in {-8,-7,...,9}{
  \draw[help lines,thick,dotted]
    (\x,-8) -- (\x,9)
    (-8,\x) -- (9,\x) 
    [rotate=60] (\x,-8) -- (\x,9) ;
}
\foreach \x  in {-8,-7,...,9}{
  \draw[help lines,thick,dotted]
    (\x,-8) -- (\x,9);
}

\fill (Origin) circle (2pt) node[below left] {0};
\fill (a1) circle (2pt) node[below] {$\alpha_1$};
\fill (a2) circle (2pt) node[above left] {$\alpha_2$};
\fill (2rho) circle (2pt) node[below] {$2\rho$};

\fill (v1) circle (2pt) node[right] {$7\varpi_1$};
\fill (v3) circle (2pt) node[above] {$7(\alpha_1+\alpha_2)$};
\fill (v4) circle (2pt) node[left] {$-7\alpha_1$};
\fill (v5) circle (2pt) node[left] {$-7\varpi_1$};
\fill (v6) circle (2pt) node[right] {$-7(\alpha_1+\alpha_2)$};
\fill (v7) circle (2pt) node[right] {$7\alpha_1$};

\draw[->,blue,thick](Origin)--(pi1);
\draw[->,blue,thick](Origin)--(pi2); 
\draw[->,blue,thick](Origin)--(a1);
\draw[->,blue,thick](Origin)--(a2);
\draw[->,blue,thick](Origin)--(a3); 
\draw[->,blue,thick](Origin)--(asum); 

\draw[thick](Origin)--(v1);
\draw[thick](Origin)--(v2);
\draw[thick](v1)--(v2);
\draw[thick](v2)--(v3);
\draw[thick](v3)--(v4);
\draw[thick](v4)--(v5);
\draw[thick](v5)--(v6);
\draw[thick](v6)--(v7);
\draw[thick](v7)--(v1);

\aeMarkRightAngle[size=6pt](Origin,v2,v1)
\end{scope}
\end{tikzpicture} 
\end{minipage}
\caption{Toric moment polytope of $\overline{T}$.}
\label{toric moment polytope}
\end{figure}

Let $o$ be the base point of the double Cayley Grassmannian $S$. 
We define $\overline{T}:=\overline{T.o} \subset S$ as the closure of the torus orbit, where $T$ is a 2-dimensional maximal torus of $G_2$. 
In order to obtain a description for the toric variety $\overline{T}$, let us recall the algebraic moment polytope for a polarized $G$-variety $(X, \mathcal L)$ introduced by Brion \cite{Brion87}, where the \emph{polarized} $G$-variety means the pair $(X, \mathcal L)$ of a projective $G$-variety $X$ and an ample line bundle $\mathcal L$ on $X$. 
Moreover, we say that $\mathcal L$ is \emph{$G$-linearized} if the $G$-action on $X$ can be lifted to $\mathcal L$ which is linear on fibers. 

\begin{definition} 
Let $\mathcal L$ be a $G$-linearized ample line bundle on a spherical $G$-variety $X$. 
The \emph{algebraic moment polytope} $\Delta(X, \mathcal L)$ of $(X, \mathcal L)$ is defined as the closure of $\displaystyle \bigcup_{k \in \mathbb N} \Delta_k / k$ in $\mathfrak X(T) \otimes \mathbb R$, 
where $\Delta_k$ is a finite set consisting of (dominant) weights $\lambda$ such that 
\begin{equation*}
H^0(X, \mathcal L^{\otimes k}) = \bigoplus_{\lambda \in \Delta_k} V_G(\lambda).
\end{equation*} 
Here, $V_G(\lambda)$ means the irreducible representation of $G$ with highest weight $\lambda$. 
\end{definition}

\begin{proposition}[Section~2 of \cite{AB04}] \label{toric amd moment polytopes} 
Let $(X, \mathcal L)$ be a polarized equivariant compactification of reductive group $G$, where $\mathcal L$ is $G$-linearized.  
If we denote by $Z$ the closure of a maximal torus $T$ of $G$ in $X$, then $Z$ is a toric manifold, equipped with an action of the Weyl group $W$, and the restriction $\mathcal L|_Z$ to $Z$ is a $W$-linearized ample toric line bundle on $Z$.
Furthermore, the toric polytope $P(Z, \mathcal L|_Z)$ is the union of
the images of the algebraic moment polytope $\Delta(X, \mathcal L)$ by $W$.
\end{proposition}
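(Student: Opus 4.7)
The plan is to establish the proposition in three stages: the toric structure on $Z$, the Weyl group compatibility, and the polytope identity, with the last being the technical heart.

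First I would identify $Z$ as a toric variety under the restriction of the $T$-action on $X$. Since $T \subset G$ preserves the orbit $T.o$, its closure $Z$ is $T$-stable, and the open orbit $T.o \subset Z$ is isomorphic to $T$ modulo a finite stabilizer (trivial in the standard group-compactification setting), which is exactly the defining property of a toric variety. The normalizer $N_G(T)$ likewise preserves $Z$: it acts on $X$, fixes the base point $o$ in the relevant equivariant compactification setting, and permutes $T$ by conjugation, so $N_G(T) \cdot T.o = T.o$ and the closure $Z$ is preserved. Dividing by the $T$-action already present, the Weyl group $W = N_G(T)/T$ acts on $Z$.

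Next I would check the line-bundle statements. Ampleness restricts from $X$ to the closed subvariety $Z$, so $\mathcal L|_Z$ is ample. Its $T$-linearization is inherited directly from the $G$-linearization of $\mathcal L$, making $\mathcal L|_Z$ an ample toric line bundle on $Z$. Because the $N_G(T)$-action on $\mathcal L$ covers the $N_G(T)$-action on $Z$, and the $T$-action is already built into the toric structure, the residual action gives the required $W$-linearization of $\mathcal L|_Z$.

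The main step is the polytope identity $P(Z, \mathcal L|_Z) = \bigcup_{w \in W} w \cdot \Delta(X, \mathcal L)$. I would decompose $H^0(X, \mathcal L^{\otimes k})$ as a $G$-module into irreducibles $V_G(\lambda)$ with $\lambda \in \Delta_k$. By the theory of highest weights, the $T$-weights appearing in $V_G(\lambda)$ form a $W$-invariant set whose convex hull is $\operatorname{conv}(W \cdot \lambda)$. On the other hand, $H^0(Z, (\mathcal L|_Z)^{\otimes k})$ has $T$-weights precisely the lattice points of $k P(Z, \mathcal L|_Z)$. The identity then reduces to comparing these two $T$-weight supports via the $T$-equivariant restriction map
\[
H^0(X, \mathcal L^{\otimes k}) \longrightarrow H^0(Z, (\mathcal L|_Z)^{\otimes k}).
\]

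The main obstacle is proving that this restriction is surjective on $T$-weight supports. My plan here is to invoke that $G$-equivariant compactifications of a reductive group are spherical under the $G \times G$-action (so multiplicities are controlled and sections are essentially determined by their highest weights), combined with Kodaira--Serre vanishing $H^i(X, \mathcal L^{\otimes k} \otimes \mathcal I_Z) = 0$ for $i > 0$ and $k \gg 0$, to conclude that the restriction map is surjective for sufficiently large $k$. Each lattice point of $k P(Z, \mathcal L|_Z)$ is then hit by a $T$-weight appearing in some $V_G(\lambda)$ with $\lambda \in \Delta_k$, and conversely each weight of $V_G(\lambda)$ lies in $\operatorname{conv}(W \cdot \lambda)$. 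Rescaling by $1/k$ and passing to the limit identifies $P(Z, \mathcal L|_Z)$ as the union of the $W$-translates of $\Delta(X, \mathcal L)$.
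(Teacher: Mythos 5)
The paper gives no proof of this proposition---it is quoted verbatim from Section~2 of Alexeev--Brion \cite{AB04}---so your attempt has to be judged on its own terms rather than against an argument in the text. The first two thirds of your plan (the toric structure on $Z=\overline{T}$, the action of $W$ through $\mathrm{diag}(N_G(T))$, and the inherited ample linearization) are essentially correct, modulo routine care about which torus is acting ($Z$ is toric for $(T\times T)/\mathrm{diag}(T)$, equivalently for left translation by $T$) and about normalizing the linearization so that the stabilizer of the base point acts trivially on the fibre there.

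The polytope identity, which you correctly identify as the technical heart, is where the argument has a genuine gap, and in both directions. For $\bigcup_{w}w\Delta(X,\mathcal L)\subseteq P(Z,\mathcal L|_Z)$: surjectivity of $H^0(X,\mathcal L^{\otimes k})\to H^0(Z,\mathcal L|_Z^{\otimes k})$ tells you that every weight occurring on $Z$ already occurs on $X$, which is the wrong direction. What you actually need is that for each $\lambda\in\Delta_k$ some section of left $T$-weight $\lambda$ does \emph{not} vanish identically on $Z$; this requires a separate (short) argument, e.g.\ evaluating the subspace $v_\lambda\otimes V_G(\lambda)^{*}\subset V_G(\lambda)\otimes V_G(\lambda)^{*}\subset H^0(X,\mathcal L^{\otimes k})$ at the identity $e\in T\subset Z$. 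For the reverse inclusion $P\subseteq\bigcup_w w\Delta$: surjectivity gives $kP\cap\mathfrak X(T)\subseteq\bigcup_{\lambda\in\Delta_k}\mathrm{wt}\,V_G(\lambda)\subseteq\bigcup_{\lambda\in\Delta_k}\mathrm{conv}(W\cdot\lambda)$, but this last union is \emph{not} obviously contained in $\bigcup_w w(k\Delta)$. A weight $\mu$ of $V_G(\lambda)$ has dominant representative $\mu^{+}\preceq\lambda$ in the dominance order, and you would need $\mu^{+}\in k\Delta$, i.e.\ that the moment polytope is saturated downward (equivalently, that $\bigcup_w w\Delta$ is convex). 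That saturation is essentially the content of the theorem; it does not follow from anything you wrote and requires either the multiplicative structure of $\bigoplus_k H^0(X,\mathcal L^{\otimes k})$ viewed inside $\mathbb C[G]$, or the local structure theorem near the closed orbits, which is how Alexeev and Brion proceed. As it stands, the concluding sentence about rescaling by $1/k$ and passing to the limit is a non sequitur.
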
 

\begin{lemma}\label{toric polytope of closure} 
The closure $\overline{T} \subset S$ of $T$ in the double Cayley Grassmannian $S$ is a smooth projective toric variety corresponding to a 2-dimensional polytope of which boundary is a regular hexagon (see Figure~\ref{toric moment polytope}). 
In particular, the toric variety $\overline{T}$ has Picard number 4. 
\end{lemma}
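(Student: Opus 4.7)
The plan is to apply the Alexeev--Brion formula (Proposition~\ref{toric amd moment polytopes}) to $S$ with the anticanonical polarization, convert the algebraic moment polytope $\Delta(S,-K_S)$ of Figure~\ref{algebraic moment polytope} into the toric polytope of $\overline{T}$ shown in Figure~\ref{toric moment polytope}, and read off both the regular hexagonal shape and the Picard number.

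First, the anticanonical bundle $-K_S = L^{7}$ is $(G_2 \times G_2)$-linearizable because $G_2 \times G_2$ is semisimple. Proposition~\ref{toric amd moment polytopes} then identifies $P(\overline{T},\, -K_S|_{\overline{T}})$ with the $W(G_2)$-orbit $\bigcup_{w \in W(G_2)} w \cdot \Delta(S, -K_S)$. Since $\Delta(S,-K_S)$ is a right triangle sitting in the closed positive Weyl chamber with two sides along its walls, and $W(G_2)$ is the dihedral group of order $12$ whose fundamental domain is exactly that chamber, its twelve translates tile the hexagon of Figure~\ref{toric moment polytope} whose vertex set is $W(G_2)\cdot(7\varpi_1) = \{\pm 7\varpi_1,\ \pm 7\alpha_1,\ \pm 7(\alpha_1+\alpha_2)\}$.

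Next I would verify that this hexagon is regular. In the $G_2$-invariant inner product normalized so that short roots have unit length, the identity $\varpi_1 = 2\alpha_1 + \alpha_2$ shows that each of $\varpi_1,\ \alpha_1,\ \alpha_1+\alpha_2$ is a short root and hence of length one, so all six vertices lie at distance $7$ from the origin. Because $W(G_2)$ acts transitively on short roots, it acts transitively on the vertex set, forcing all edges to be congruent and all interior angles equal to $120^\circ$; the hexagon is therefore regular and, being Delzant, its normal fan yields a smooth projective toric surface — concretely the del Pezzo surface of degree $6$.

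Finally, the hexagon has $6$ edges, so the fan of $\overline{T}$ has exactly $6$ rays and $6$ torus-invariant prime divisors, and the standard exact sequence $0 \to M \to \mathbb Z^{6} \to \Pic(\overline{T}) \to 0$ for smooth complete toric surfaces gives $\rho(\overline{T}) = 6 - 2 = 4$. The most delicate input is the identification of $\Delta(S,-K_S)$ with the specific triangle drawn in Figure~\ref{algebraic moment polytope}: this is where the spherical-variety structure of the symmetric space $(G_2 \times G_2)/G_2$ genuinely enters (via Brion's formula for moment polytopes of symmetric varieties). Once that input is granted, the remainder of the proof reduces to elementary dihedral geometry and standard toric bookkeeping.
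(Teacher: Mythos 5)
Your proposal is correct and follows essentially the same route as the paper: both apply Proposition~\ref{toric amd moment polytopes} to $(S,-K_S)$, take the $W(G_2)$-orbit of the algebraic moment polytope of Figure~\ref{algebraic moment polytope} (whose identification both treat as the external input, via \cite{LPY21}) to obtain the hexagon of Figure~\ref{toric moment polytope}, and read off smoothness and the Picard number from that polytope. You merely make explicit what the paper leaves to the figures, namely the short-root computation $\varpi_1=2\alpha_1+\alpha_2$ showing the hexagon is regular and the toric exact sequence giving $\rho(\overline{T})=6-2=4$.
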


\begin{proof}
By Proposition~\ref{toric amd moment polytopes}, the toric moment polytope of $\overline{T}$ in Figure~\ref{toric moment polytope} is obtained by the Weyl group action from the algebraic moment polytope $\Delta(S, K_S^{-1})$ in Figure~\ref{algebraic moment polytope} (see Section~3.6 of \cite{LPY21}). 
From the description of the toric moment polytope in Figure~\ref{toric moment polytope}, we see that the compactification $\overline{T}$ in $S$ is a smooth projective toric surface of Picard number $4$. 
\end{proof}

\subsection{Proof of Theorem~\ref{Main theorem}}
We assume that $\mathcal X_0$ is not biholomorphic to the double Cayley Grassmannian $S$, that is, $\mathcal X_0$ is an equivariant compactification of the vector group $\mathbb G_a^{14}$ by Proposition~\ref{prop:two central fiber}. 
Because the Picard number of $\mathcal X_0$ is one, $\mathcal X_0$ is also Fano and $\mathcal X \rightarrow \triangle$ is a Fano family. 
So, we will use an argument similar to that in \cite{FuLi}. 

For the general fiber which is the double Cayley Grassmannian $S$, a compactification of the group $G_2$, the identity component of the automorphism group is isomorphic to $G_2 \times G_2$. 
Whereas for the central fiber $\Aut^0(\mathcal X_0)= \mathbb G_a(\mathfrak g_2) \rtimes (G_2 \times \mathbb C^*)$ due to Corollary~\ref{cor: central fiber}. 
From the action of $G_2$ on each fiber, there exists a $T$-action on the family $\pi \colon \mathcal X \rightarrow \Delta$, where $T$ is a 2-dimensional maximal torus of $G_2$. 
We consider a section $\sigma$ of $\pi \colon \mathcal X \rightarrow \Delta$ such that the image is contained in the fixed locus $\mathcal X^T$ of $T$. 
Choose a connected component $\mathcal Y$ of $\mathcal X^T$ containing $\sigma(\Delta)$, which is smooth by Corollary~5.4 of \cite{Fogarty}.

Let $o\in S$ be a general point, and consider the left $T$-action on $S$ at $o$. Then, since the set of $T$-fixed points of $G_2$ under the adjoint action is $T$, the general fiber $\mathcal Y_t$ is isomorphic to the closure $\overline{T}=\overline{T.o} \subset S$. On the other hand, since $\mathcal X_0$ is an equivariant compactification of the vector group $\mathbb G_a(\mathfrak g_2)$, $\mathcal Y_0$ is also an equivariant compactification of the vector group $\mathbb G_a(\mathfrak t)$, where $\mathfrak t$ is the Lie algebra of $T$. 
Hence, we have the smooth family $\varpi \colon \mathcal Y \rightarrow \Delta$ such that the fibers are smooth surfaces given by $\mathcal Y_t:=\overline{T.\sigma(t)} \subset \mathcal X_t$ for $t\neq 0$ and $\mathcal Y_0= \overline{\mathbb G_a(\mathfrak t)} \subset \mathcal X_0$. 
See Proposition 4.12 (2) in \cite{FuLi}. 

\begin{lemma}\label{Cor: Picard group}
For the smooth family $\varpi \colon \mathcal Y \rightarrow \Delta$ defined in the preceding paragraph, the Picard group of fibers is invariant, that is, $\Pic(\mathcal Y_t)=\Pic(\mathcal Y_0)$ for $t\neq 0$. 
In particular, the central fiber $\mathcal Y_0$ of $\varpi \colon \mathcal Y \rightarrow \Delta$ is a smooth surface of Picard number $4$.   
\end{lemma}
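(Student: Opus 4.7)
The plan is to combine three inputs: Ehresmann's fibration theorem, to get that the second Betti number is locally constant on $\Delta$; the rationality of both the general and special fibers of $\varpi$, which yields the Hodge-theoretic vanishings $h^{0,1}=h^{0,2}=0$; and the Lefschetz $(1,1)$ theorem, which upgrades these vanishings to an isomorphism between $\Pic$ and $H^2(-,\mathbb Z)$ on each fiber.

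First I would verify that both $\mathcal Y_t$ (for $t\ne 0$) and $\mathcal Y_0$ are rational smooth projective surfaces. For $t\ne 0$ the fiber $\mathcal Y_t\cong \overline{T}$ is a smooth projective toric surface by Lemma~\ref{toric polytope of closure}, and every smooth projective toric variety is rational; hence $h^{0,1}(\mathcal Y_t)=h^{0,2}(\mathcal Y_t)=0$ and its Picard number equals $b_2(\mathcal Y_t)=4$. The central fiber $\mathcal Y_0$ is by construction an equivariant compactification of $\mathbb G_a(\mathfrak t)\cong \mathbb G_a^2$, and therefore contains a Zariski open subset biholomorphic to the affine plane $\mathbb A^2$; in particular $\mathcal Y_0$ is rational and $h^{0,1}(\mathcal Y_0)=h^{0,2}(\mathcal Y_0)=0$. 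Both surfaces are simply connected, so $H^2(-,\mathbb Z)$ is torsion-free.

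Next I would apply Ehresmann's fibration theorem to the proper smooth submersion $\varpi\colon \mathcal Y \to \Delta$: since $\Delta$ is contractible, $\varpi$ is a $C^\infty$ trivial fibration, yielding canonical isomorphisms $H^2(\mathcal Y_0,\mathbb Z)\cong H^2(\mathcal Y_t,\mathbb Z)$, and in particular $b_2(\mathcal Y_0)=b_2(\mathcal Y_t)=4$. Finally, on each fiber $F$ of $\varpi$ the exponential sequence combined with the vanishings $h^{0,1}(F)=h^{0,2}(F)=0$ forces the first Chern class map $c_1\colon \Pic(F) \to H^2(F,\mathbb Z)$ to be an isomorphism: injectivity comes from $H^1(\mathcal O_F)=0$, and surjectivity from $H^2(\mathcal O_F)=0$ together with Lefschetz $(1,1)$ and the torsion-freeness of $H^2(F,\mathbb Z)$. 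Chaining the three isomorphisms gives $\Pic(\mathcal Y_t)\cong \Pic(\mathcal Y_0)$, both of Picard number $4$. The only delicate point is the rationality of $\mathcal Y_0$, which is however immediate from its description as a compactification of $\mathbb A^2$ and is also visible in the classification of 2-dimensional $\mathbb G_a^2$-compactifications recorded in Example~\ref{ex: rank 2 vector group}, each member of which is rational.
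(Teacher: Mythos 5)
Your proposal is correct and follows essentially the same route as the paper: both identify $\Pic$ with $H^2(-,\mathbb Z)$ via the exponential sequence after establishing $h^1(\mathcal O)=h^2(\mathcal O)=0$ on every fiber, and then invoke the topological invariance of $H^2(-,\mathbb Z)$ across the smooth family. The only immaterial difference is that the paper obtains the vanishing on the central fiber from rational connectedness specializing in smooth families, whereas you read it off directly from $\mathcal Y_0$ being a compactification of $\mathbb A^2$ (and your appeal to Lefschetz $(1,1)$ is redundant once $H^2(\mathcal O_F)=0$).
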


\begin{proof}
As this is a general statement, let's prove it for general situations. 
Consider a smooth family $\mathcal Y \rightarrow \triangle$ such that $\mathcal Y_t$ are Fano for all $t\neq 0$. 
Since $\mathcal Y_t$ is rationally connected, $\mathcal Y_0$ is also rationally connected. 
Then, we have $H^i(\mathcal Y_0, \mathcal O_{\mathcal Y_0}) = H^i(\mathcal Y_t, \mathcal O_{\mathcal Y_t})=0$ for all $i=1,2$. 
From the exponential sequence
$$0 \rightarrow \mathbb Z \rightarrow \mathcal O_{\mathcal Y_t} \rightarrow \mathcal O^*_{\mathcal Y_t} \rightarrow 0,$$
we have 
$$0=H^1(\mathcal Y_t, \mathcal O_{\mathcal Y_t}) \rightarrow H^1(\mathcal Y_t, \mathcal O^*_{\mathcal Y_t}) \rightarrow H^2(\mathcal Y_t, \mathbb Z) \rightarrow H^2(\mathcal Y_t, \mathcal O_{\mathcal Y_t})=0.$$
In particular, we have $H^2(\mathcal Y_t, \mathbb Z)=H^1(\mathcal Y_t, \mathcal O^*_{\mathcal Y_t})=\rm{Pic}(\mathcal Y_t)$ for all $t$. 
On the other hand, since $\mathcal Y_t$ are smooth, $H^2(\mathcal Y_t, \mathbb Z)$ are invariant for all $t$, hence we have $\Pic(\mathcal Y_t)=\Pic(\mathcal Y_0)$ for all $t$. 

For the smooth family $\varpi \colon \mathcal Y \rightarrow \Delta$, by Lemma~\ref{toric polytope of closure} the general fiber $\mathcal Y_t=\overline{T}$ is a smooth Fano toric surface of Picard number 4. 
Hence, the central fiber $\mathcal Y_0$ should be also a smooth surface of Picard number $4$.
\end{proof}

Let $W(G_2)=N_{G_2}(T)/T$ be the Weyl group of $G_2$. 
Since we have the action of $G_2$ on the whole family $\mathcal X / \triangle$ from Corollary~\ref{cor: central fiber} and $\mathcal Y$ is a connected component of $\mathcal X^T$, it induces an action of the Weyl group $W(G_2)$ on the smooth family $\mathcal Y / \triangle$. 
In particular, $W(G_2)$ acts on the boundary $\partial \mathcal Y_t$ for each $t$.

\begin{lemma}\label{Orbit} 
The Weyl group $W(G_2)$ of $G_2$ acts transitively on the set of irreducible components of the boundary divisor $\partial \mathcal Y_t = \mathcal Y_t \backslash T$ for all $t\neq 0$, that is, $W(G_2)$ has one orbit.
\end{lemma}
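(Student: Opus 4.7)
The plan is to reduce the statement to the combinatorial fact that the Weyl group of $G_2$ is exactly the symmetry group of the regular hexagon and hence acts transitively on its six edges.

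First I would appeal to Lemma~\ref{toric polytope of closure} to describe $\mathcal Y_t$ for $t\ne 0$ concretely: it is the smooth toric surface $\overline{T}\subset S$ whose moment polytope is the regular hexagon in $\mathfrak X(T)\otimes\mathbb R$ with vertices $\pm 7\varpi_1$, $\pm 7\alpha_1$, $\pm 7(\alpha_1+\alpha_2)$ depicted in Figure~\ref{toric moment polytope}. By the orbit--cone correspondence for toric varieties, the $T$-invariant prime divisors in $\overline{T}$ are in bijection with the facets of this polytope. Since the hexagon has exactly six edges, the boundary divisor $\partial\mathcal Y_t=\mathcal Y_t\setminus T$ has exactly six irreducible components, one for each edge.

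Next I would identify the $W(G_2)$-action on $\mathcal Y_t$ with the combinatorial action on the polytope. By construction, $\mathcal Y$ is a connected component of $\mathcal X^T$ containing the section $\sigma$, so $\mathcal Y$ is preserved by the normalizer $N_{G_2}(T)$, and the induced action on $\mathcal Y_t$ factors through $W(G_2)=N_{G_2}(T)/T$. On the open orbit $T\subset\mathcal Y_t$ this is the usual Weyl group action, which extends to $\overline{T}$ via the standard $W$-action on the character lattice $\mathfrak X(T)$. In particular, the action of $W(G_2)$ on the set of irreducible components of $\partial\mathcal Y_t$ is identified with its action on the set of edges of the hexagonal polytope.

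Finally, the Weyl group $W(G_2)$ is the dihedral group of order $12$, generated by the simple reflections $s_1,s_2$ with $(s_1s_2)^6=1$. This coincides with the full symmetry group of the regular hexagon, which acts transitively on the six edges. Hence $W(G_2)$ acts transitively on the six irreducible components of $\partial\mathcal Y_t$. The only point requiring a small verification is that the action of $W(G_2)$ on $\mathfrak X(T)\otimes\mathbb R$ is indeed realized as the full dihedral symmetry of this particular hexagon; this follows from the faithfulness of the Weyl representation together with the coincidence $|W(G_2)|=12=|\mathrm{Dih}(\text{hexagon})|$, once we have noted that the polytope is $W$-invariant by Proposition~\ref{toric amd moment polytopes}.
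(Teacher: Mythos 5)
Your proof is correct and takes essentially the same approach as the paper's: both arguments rest on the hexagonal toric description of $\overline{T}$ from Lemma~\ref{toric polytope of closure} and reduce the claim to the transitivity of $W(G_2)$ on the six boundary divisors. The only cosmetic difference is that the paper indexes those divisors by the rays through the single $W$-orbit $W\cdot\varpi_2=\{\epsilon_i-\epsilon_j : i\neq j\}$, whereas you index them by the edges of the hexagon and invoke the identification of $W(G_2)$ with the full dihedral symmetry group of order $12$ — the same computation in dual language.
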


\begin{proof}
Let $W:=W(G_2)$. 
Let $\epsilon_1$, $\epsilon_2$, $\epsilon_3$ be vectors of the dual $\mathfrak t^*(G_2)$ of the Cartan subalgebra $\mathfrak t$ such that $\sum \epsilon_i = 0$, $(\epsilon_i,\epsilon_i)=3/4$ and $(\epsilon_i,\epsilon_j)=-1/4$ for $i\neq j$. 
With these vectors, the simple roots are $\alpha_1=-\epsilon_2$ and $\alpha_2=\epsilon_2-\epsilon_3$, the fundamental weights are $\varpi_1=\epsilon_1$ and $\varpi_2=\epsilon_1-\epsilon_3$ and the Weyl group $W$ is given by $\{\pm I\} \times \mathfrak S_3$, where $I$ is the identity map and the symmetric group $\mathfrak S_3$ of order 6 acts on the set $\{\epsilon_1, \epsilon_2, \epsilon_3 \}$ as permutations. 
The $W$-orbit of the fundamental weight $\varpi_1$ is the set $\{ \pm \epsilon_i : =1,2,3 \}$ and the $W$-orbit of the fundamental weight $\varpi_2$ is the set $\{ \epsilon_i-\epsilon_j : 1 \leq i \neq j \leq 3 \}$. 
Since the divisor corresponding to $\varpi_1$ disappears by blowing down from the wonderful compactification of $G_2$, an irreducible component of the boundary divisor $\partial \overline{T}$ corresponds to a unique ray generated by an element in $W.\varpi_2=\{ \epsilon_i-\epsilon_j  : 1 \leq i \neq j \leq 3 \}$. 
In conclusion, $W$ acts transitively on the set of irreducible components of the boundary divisor $\partial \mathcal Y_t$ for all $t\neq 0$.
\end{proof}

By Lemma~\ref{Cor: Picard group}, we have $\Pic(\mathcal Y_t)=\Pic(\mathcal Y_0)$ for all $t\neq 0$, so that $\Pic(\mathcal Y/\triangle)=\Pic(\mathcal Y_t)$ for all $t \in \triangle$. 
Since the actions of $W(G_2)$ on the maximal torus $T \subset G_2$ and on its Lie algebra $\mathfrak t$ are the natural ones, the induced action of $W(G_2)$ on $\mathcal Y$ gives identifications $\Pic(\mathcal Y_t)^{W(G_2)}=\Pic(\mathcal Y/\triangle)^{W(G_2)}=\Pic(\mathcal Y_0)^{W(G_2)}$. 
By Lemma~\ref{Orbit}, 
the rank of $W(G_2)$-fixed Picard group of $\mathcal Y_t$ is one for $t \neq 0$. 
Hence, the $W(G_2)$-fixed Picard group of $\mathcal Y_0$ has also rank one. 

Let $D_1, \cdots, D_r$ be the irreducible components of the boundary divisor $\partial \mathcal Y_0 = \mathcal Y_0 \backslash \mathbb G_a(\mathfrak t)$. 
Then $\Pic(\mathcal Y_0)^{W(G_2)} \otimes \mathbb Q$ is generated by $\{\sum_{j \in J_i} D_{i, j}\}_{i \in I}$, where the elements in $\{D_{i, j} : j \in J_i \}$ are in the same $W(G_2)$-orbit by Lemma~4.16  of \cite{FuLi}, hence $D_1+ \cdots +D_r$ is the generator of $W(G_2)$-fixed Picard group of $\mathcal Y_0$. 
Otherwise, the $W(G_2)$-fixed Picard group of $\mathcal Y_0$ would have rank greater than one because $D_1, \cdots, D_r$ generate freely the Picard group of $\mathcal Y_0$ by Theorem~2.5 in \cite{HT99}. 
In conclusion, $W(G_2)$ acts transitively on the set of irreducible components of $\partial \mathcal Y_0$. 
However, a vector group compactification $\mathcal Y_0$ of $\mathbb G_a^2$ with this property is isomorphic to either $\mathbb P^2$ or $\mathbb P^1 \times \mathbb P^1$ by Remark~4.4 of \cite{FuLi} (see also Example~\ref{ex: rank 2 vector group}). 
This contradicts Lemma~\ref{Cor: Picard group} saying the Picard number of $\mathcal Y_0$ is 4. 
Therefore, the central fiber $\mathcal X_0$ is also biholomorphic to the general fiber $S$.
\qed

\vskip 3em 

\providecommand{\bysame}{\leavevmode\hbox to3em{\hrulefill}\thinspace}
\providecommand{\MR}{\relax\ifhmode\unskip\space\fi MR }
\providecommand{\MRhref}[2]{%
  \href{http://www.ams.org/mathscinet-getitem?mr=#1}{#2}
}
\providecommand{\href}[2]{#2}

\end{document}